\documentclass[11pt]{amsart}

\usepackage[margin=3cm]{geometry}
\usepackage{amsmath}
\usepackage{amsthm, array}
\usepackage{amssymb}
\usepackage{mathabx}
\usepackage{amsfonts}
\usepackage{enumerate}
\usepackage{enumitem}
\usepackage[linktocpage=true, pdfencoding=auto, psdextra]{hyperref}
\usepackage{todonotes}

\usepackage{mathtools}
\usepackage{xcolor}
\usepackage{tikz}
\usetikzlibrary{positioning}
\usepackage[all]{xy}
\usepackage[pdftex]{pict2e}
\usetikzlibrary{matrix}

\newtheorem{thm}{Theorem}[section]
\newtheorem{cor}[thm]{Corollary}
\newtheorem{lem}[thm]{Lemma}
\newtheorem{prop}[thm]{Proposition}

\theoremstyle{definition}
\newtheorem{defin}[thm]{Definition}

\theoremstyle{remark}

\newtheorem{rmk}[thm]{Remark}

\newtheorem*{theorem*}{Theorem}

\usepackage{scalerel,stackengine}
\stackMath
\newcommand\reallywidehat[1]{%
	\savestack{\tmpbox}{\stretchto{%
			\scaleto{%
				\scalerel*[\widthof{\ensuremath{#1}}]{\kern-.6pt\bigwedge\kern-.6pt}%
				{\rule[-\textheight/2]{1ex}{\textheight}}
			}{\textheight}%
		}{0.5ex}}%
	\stackon[1pt]{#1}{\tmpbox}%
}
\DeclareMathOperator{\lcm}{lcm}
\DeclareMathOperator{\rank}{rank}  
\begin{document}
	\def\mapright#1{\ \smash{\mathop{\longrightarrow}\limits^{#1}}\ }
	\def\mapleft#1{\ \smash{\mathop{\longleftarrow}\limits^{#1}}\ }
	\def\mapup#1{\Big\uparrow\rlap{$\vcenter {\hbox {$#1$}}$}}
	\def\mapdown#1{\Big\downarrow\rlap{$\vcenter {\hbox {$\ssize{#1}$}}$}}
	\def\mapne#1{\nearrow\rlap{$\vcenter {\hbox {$#1$}}$}}
	\def\mapse#1{\searrow\rlap{$\vcenter {\hbox {$\ssize{#1}$}}$}}
	\def\mapr#1{\smash{\mathop{\rightarrow}\limits^{#1}}}
	\def\lb{[}
	\def\ss{\smallskip}
	\def\at{{\widetilde\alpha}}
	\def\sm{\wedge}
	\def\la{\langle}
	\def\ra{\rangle}
	\def\on{\operatorname}
	\def\ssize{\scriptstyle}
	\def\ar#1{\stackrel {#1}{\rightarrow}}
	\def\br{\bold R}
	\def\bc{\bold C}
	\def\si{\sigma}
	\def\zp{\bold Z_p}
	\def\da{\downarrow}
	\def\xbar{{\overline x}}
	\def\ebar{{\overline e}}
	\def\ni{\noindent}
	\def\coef{\on{coef}}
	\def\den{\on{den}}
	\def\ot{\otimes}
	\def\ms{\medskip}
	\def\cl{ \mathcal{L }}
	\def\cf{\mathcal{F}}
	\def\ct{\mathcal{T}}
	\def\ctl{\mathcal{T}_L}
	\def\d{\displaystyle\mathop}
	\def\wcm{ \widetilde{\cm}}
	\def\supp{\mathrm{supp}}
	\newtheorem{expls}[equation]{Examples}
	\newtheorem{exam}[equation]{Example}
	\newtheorem{exams}[equation]{Examples}
	\def\ia{\emph{em[\rm (a)]\ }}
	\def\ib{\emph{em[\rm (b)]\ }}
	\def\ic{\emph{em[\rm (c)]\ }}
	\def\id{\emph{em[\rm (d)]\ }}
	\def\ad{{\rm ad}}
	\def\im{{\rm im}}
	\def\hl{{ \hat{\ell}}}
	\def\hn{{ \hat{ \mathcal N}}}
	\def\ead{{\rm expad}}
	\def\nz{\mathbb{Z}}
	\def\nk{\Bbbk}
	\def\nn{\mathbb{N}}
	\def\nzp{{\mathbb{Z}^+}}
	\def\nr{\mathbb{R}}
	\def\nc{\mathbb{C}}
	\def\cg{ \mathcal{G }}
	\def\cn{ \mathcal{N }}
	\def\cw{ \mathcal{W }}
	\def\cm{ \mathcal{M }}
	\def\lhn{ \hat{n}}
	\def\pf{\noindent {\bf Proof :\ }}
	\def\glnc{\mathfrak{gl}(n,\nc)}
	\def\glnr{\mathfrak{gl}(n,R)}
	\def\vsmfv{\vspace{-.05in}}
	\def\vspfv{\vspace{.05in}}
	\def\vspsv{\vspace{.07in}}
	\def\pr{^{\,\prime}}
	\def\mh{ \widehat{m}}
	\def\per{.$\!$\, }
	\def\fg{\mathfrak{g}}
	\def\fl{\mathfrak{L}}
	\def\sl{\mathfrak{sl}}
	\def\gl{\mathfrak{gl}}
	\def\ibul\emph{em[$\bullet$]}
	\def\dpr{^{\prime \prime}}
	\def\without{w.l.o.g\per }
	\def\pgs{pp\per }
	\def\skp#1{\vskip#1cm\relax}
	\def\nd{\noindent}
	\def\der{\dfrac{d}{dx}}
	\def\lm{\lim\limits_{h \to 0}}
	\def\dif{\dfrac{d}{dx}}
	\def\vrr{\mathcal{VR}(Q_n;r)}
	\def\vr5{\mathcal{VR}(Q_5;3)}
	\def\Lu{ Lyubeznik }
	\def\wc{\widehat{\mathcal{C}}}
	\def\kv{\Bbbk[x_0,\ldots,x_{m-1}]}
	\def\kx{\Bbbk[x_0,\ldots,x_{2^n-1}]}
	\def\VR{VR(Q_n;r)}
	\def\vp{\varPhi}

	\def\pfbox{	\hspace{5 in} \qedsymbol}
	\setlength{\oddsidemargin}{.25in}
	\setlength{\textwidth}{6in}
	\newcommand*{\backin}{\rotatebox[origin=c]{-180}{$\in$}}

	\setlength{\oddsidemargin}{.25in}
	\setlength{\textwidth}{6in}

	\def\zk{\mathcal{Z}_K}

	\def\VR{VR(Q_n;r)}
	\def\nz{\mathbb{Z}}
	\newcommand{\Tor}{\mathrm{Tor}}

	\title{Cohomological properties of the Vietoris--Rips Complex of a Hypercube Graph}

	\author[M.~Bendersky]{Martin~Bendersky}
	\address{Department of Mathematics, Hunter College, CUNY 695 Park Avenue New York, NY 10065, U.S.A.}
	\email{mbenders@hunter.cuny.edu}
	
	\author[S.~Elia]{Salvatore Elia}
	\address{School of Mathematical Sciences, University of Southampton, SO17 1BJ Southampton, UK}
	\email{S.Elia@soton.ac.uk}
	\author[J.~Grbi\' c]{Jelena~Grbi\' c}
	\address{School of Mathematical Sciences, University of Southampton, SO17 1BJ Southampton, UK}
	\email{J.Grbic@soton.ac.uk}

	\subjclass{Primary: 05E45, 55U10 Secondary:	05C10, 	55N31 }
	
	\keywords{Vietoris--Rips complexes, hypercubes, connectivity, total domination number of a graph, independence complex, Tor algebra, ghost vertices}
	
	\begin{abstract}
		We develop a toric topological framework for studying the cohomology of Vietoris--Rips complexes $VR(Q_n;r)$ of hypercube graphs. Using total domination invariants and spectral methods, we establish general lower bounds on connectivity, which leads to infinite families of counterexamples to Shukla's conjecture, and derive first global upper bounds on coconnectivity. Our approach interprets Vietoris--Rips complexes via Stanley--Reisner rings, moment-angle complexes, and Tor algebras, allowing global topological information to be extracted from combinatorial data.   In a second direction, we construct explicit cohomology classes using the Koszul resolution and show that they decomposable products of $1$-dimensional classes, and that their representatives can be combimbinatorially realised as the boundary of cross polytopes positively answering the question posed by Adams and Virk. We introduce ghost vertices as a new tool for detecting, extending, and proving linear independence of cohomology classes.
	\end{abstract}
	
	\maketitle

	\section{Introduction}
	Let $Q_n$ be the metric space whose underlying set is the vertex set of the $n$-dimensional hypercube $\mathbb I^n$ equipped with the shortest--path metric. Via the natural identification of $\mathbb I^n=\{0,1\}^n$, each vertex of $Q_n$ corresponds to a binary string of length $n$. Endowed with the \emph{Hamming distance} $d_H$, which counts the number of coordinates in which two binary strings differ, this identification gives an isometry between the graph metric on $Q_n$ and its Hamming metric description.
	
	Given a metric space $(Q_n,d)$ and a scale parameter $r\ge 0$, the \emph{Vietoris--Rips complex} $\VR$ of the $n$-hypercube graph is a simplicial complex with vertex set $Q_n$, in which a finite subset $\sigma\subseteq Q_n$ spans a simplex if and only if its diameter with respect to $d$ is at most $r$.
	
	Vietoris--Rips complexes occupy a central position in the foundation of algebraic topology and geometric group theory, while more recently they have played a prominent role in applied and combinatorial topology, particularly in topological data analysis, where they serve as computable models for recovering the homotopy type of metric spaces. In the highly structured setting of graph metrics and notably for hypercube graphs, they also form a natural testing ground for connections between combinatorics, topology, and commutative algebra.
	
	The topology of $VR(Q_n;r)$ has attracted growing attention in recent years; see, for example, \cite{aa}, \cite{av}, \cite{f}, \cite{s}. 
	
	Adamaskez and Adams~\cite{aa} showed that $VR(Q_n;2)$ has the homotopy type of a wedge of 3-spheres, Shukla~\cite{s} proved that the cohomology of $VR(Q_n;3)$ is concentrated in degrees $4$ and $7$, and  Feng~\cite{f} subsequently identified its homotopy type as a wedge of spheres in these dimensions. Motivated by these results, Shukla conjectured~\cite{s}, that for $n\geq r+2$, the reduced homology $\widetilde{H}_i(VR(Q_n;r);\nz)$ is non-trivial if and only if $i=r+1$ or $r=2^r-1$, which would in particular imply that $VR(Q_n;r)$ is $r$-connected. Using the powerful Easley Cluster at Auburn University and the Ripser software package, Fang~\cite{av} calculated the first $15$ homology groups of $VR(Q_6;4)$ with $\mathbb{Z}/2$ coefficient and found that the first non-tivial homology group appears in degree 7,
	which shows that $VR(Q_6;4)$ is $6$-connected. 
	
	In the present work, we developed a toric topological framework for the study of Vietoris--Rips complexes, centered around Stanley--Reisner rings, and Tor algebras. 
	
	The first main goal of this paper is to study \emph{global topological properties} of Vietoris--Rips complexes of hypercubes, with emphasis on connectivity and its dual notion, coconnectivity. Our approach is based on an interpretation of the Vietoris--Rips complexes as independence complexes of associated graphs, allowing us to combine methods from graph theory, spectral theory, and commutative algebra. Using total domination invariants and Laplacian eigenvalue estimates, we obtain general lower bounds on connectivity, producing infinite families of counterexamples to Shukla’s conjecture. On the other hand, by analysing the Tor algebra of the associated Stanley--Reisner ring, and establishing a new duality of subcomplexes of the Taylor complex, we derive the first general upper bounds on the coconnectivity of $VR(Q_n;r)$, sharply restricting the range of degrees in which cohomology may appear.
	
	The second main focus of the paper is \emph{constructive and algebraic}. Building on ideas announced in~\cite{be} and developed further by Adams and Virk~\cite{av}, we give an explicit description of cohomology classes in Vietoris--Rips complexes using tools from toric topology, including moment-angle complexes, the Koszul resolution, and Hochster’s decomposition of Tor algebras. From this perspective, cohomology classes correspond to concrete monomials in the Koszul
	complex and can be interpreted topologically via full subcomplexes of
	$VR(Q_n;r)$. In particular, we show that the classes constructed by Adams and Virk arise naturally as $*$-decomposable elements in the cohomology of the associated moment-angle complex, providing a conceptual explanation of their structure and propagation behavior. Moreover, we show that their representatives can be combimbinatorially realised as the boundary of cross polytopes positively answering the question posed by Adams and Virk~\cite{av}.
	
	A key innovation of this paper is the introduction of \emph{ghost vertices}. This construction allows us to enlarge full subcomplexes while retaining precise control over the associated Stanley--Reisner rings, Koszul complexes, and Tor algebras. Ghost vertices enable a robust new method for detecting non-trivial cohomology classes and for proving linear independence of families of classes in $H^*(VR(Q_n;r))$. In particular, they allow us to reprove and extend propagation results without recourse to cubical hull conditions, imposed by Adams and Virk~\cite{av}, placing these phenomena in a unified toric topological framework.

	\section{Connectivity of $\VR$} 
	
	In this section, we provide a method for obtaining lower bounds on the connectivity of the Vietoris--Rips complexes $\VR$ for arbitrary values of $n$ and $r$. Our approach is based on the total domination invariant of a graph and leads to  an infinite family of counterexamples to Shukla’s conjecture.
	
	\subsection{Total domination number and connectivity of independence complexes}
	
	We begin by recalling several basic definitions and results concerning total dominating sets of graphs and the independence complexes associated with graphs.
	
	A \emph{graph} $G = (V(G), E(G))$ consists of a nonempty set $V(G)$ of vertices together with a (possibly empty) set $E(G)$ of unordered pairs of distinct vertices, called edges. Throughout this paper, we restrict attention to  simple graphs, that is, graphs without loops or multiple edges. A vertex is said to be \emph{isolated} if it is not incident to any edge.
	
	\begin{defin}[\cite{hy}]
		Let $G$ be a graph with no isolated vertices.
		A \emph{total dominating set} of $G$ is a subset $S\subset V(G)$ such that every vertex of $G$ is adjacent to at least one vertex in $S$. 
	\end{defin} 
	
	We emphasise that each vertex in a total dominating set must itself be adjacent to another vertex in the set. In particular, a vertex does not dominate itself. For example, in the graph illustrated in Figure~\ref{fig:tds}, the set $S=\{1,2\}$ is not a total dominating set, since the vertex $1$ is not adjacent to any vertex in $S$. In contrast, the set $S=\{0,2\}$ is a total dominating set, as every vertex of $G$ is adjacent to at least one vertex in $S$. Finally, the singleton set $\{0\}$ is not a total dominating set, since vertex $0$ is not adjacent to itself.
	\begin{figure}
		\centering
		\label{fig:tds}
		\setlength{\unitlength}{1cm}
		\begin{picture}(3,1.5)
			\thicklines
			\put(0,0.5){\circle*{0.1}}
			\put(-0.3,0.3){$0$}
			
			\put(2,0.5){\circle*{0.1}}
			\put(2.1,0.3){$1$}
			
			\put(1.8,1.1){\circle*{0.1}}
			\put(2,1){$2$}
			
			\put(0,0.5){\line(1,0){2}}
			\put(0,0.5){\line(3,1){1.8}}
		\end{picture}	
		\caption{Total Dominating Sets}
	\end{figure}
	\begin{defin}
		The \emph{total domination number} of a graph $G$, denoted by $\gamma_t(G)$, is the minimum cardinality of a total dominating set of $G$.
	\end{defin} 
	Equivalently, $\gamma_t(G)$ is the smallest size of a subset of vertices such that every vertex of $G$ has a neighbour in this subset. Our interest in the invariant $\gamma_t(G)$ is motivated by a theorem of Chudnovsky~\cite{c}, which relates the total domination number of a graph to the connectivity of its independence complex.
	
	Let $G=(V,E)$ be a graph. The \emph{complement} of $G$, denoted by $G^c=(V, E^c)$, is the graph with the same vertex set $V$, where two distinct vertices are adjacent in  $G^c$ if and only if they are not adjacent in $G$. The \emph{independence complex} of $G$, denoted by $I(G)$, is defined as the clique complex of the complement graph $G^{c}$.
	
	It is straightforward to observe that the Vietoris--Rips complex $\VR$ is the clique~complex of its 1-skeleton. For integers $n$ and $r$, let $G_{n,r}$ denote the 1-skeleton of $\VR$; that is, $G_{n,r}$ is the graph with vertex set $Q_n$, in which two vertices $a,b\in Q_n$ are adjacent if and only if $d_H(a,b)\le r$. Consequently, the Vietoris--Rips complex $\VR$ can be identified with the independence complex of the complementary graph $G_{n,r}^{c}$, namely, $\VR = I\bigl(G_{n,r}^{c}\bigr)$.
	
	Throughout this paper, the connectivity of a simplicial complex $K$ is defined as the connectivity of its geometric realisation $|K|$. More precisely, $K$ is said to be \emph{$k$-connected} if $\pi_i(|K|)=0$ for all $0\le i\le k$, and we write $\mathrm{Conn}(K)\ge k$. Assuming that $K$ is simply connected, this holds if and only if $\widetilde H_i(K;\mathbb{Z})=0$ for all $i\le k$ and therefore $\mathrm{Conn}(K)=\max\{k\ |\ \widetilde H_i(K;\mathbb{Z})=0 \text{ for all } i\le k\}$.
	The connectivity of an independence complex can be controlled by the total domination number of the graph, as made precise by the following theorem.
	
	\begin{thm}[\cite{c}, \cite{m}] \label{thm:ch} 
		If $\gamma_t(G) > 2k$, then the independence complex $I(G)$ is $(k-1)$-connected.
	\end{thm}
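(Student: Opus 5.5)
The plan is to prove the statement by induction on $k$, using the standard vertex-link/deletion decomposition of independence complexes. For any vertex $v$ of $G$ we have
\[
I(G)=I(G\setminus v)\cup \bigl(v * I(G\setminus N[v])\bigr),
\]
and the two pieces intersect in $I(G\setminus N[v])$. The cone $v*I(G\setminus N[v])$ is contractible. Hence, by Mayer--Vietoris together with van Kampen (or the gluing lemma for connectivity), $I(G)$ is $(k-1)$-connected as soon as $I(G\setminus v)$ is $(k-1)$-connected and $I(G\setminus N[v])$ is $(k-2)$-connected. The base case is $k=0$, where there is nothing to prove: $(-1)$-connected only means nonempty, and $\gamma_t(G)>0$ forces $V(G)\neq\emptyset$.

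The main difficulty is that deletion can destroy the hypothesis, because removing vertices may create isolated vertices, and then $\gamma_t$ is undefined. To avoid this, I would prove a slightly stronger statement by a different, more robust route, namely Meshulam's. That route uses a domination parameter that behaves well under these operations. For $A\subseteq V$, say that $S$ totally dominates $A$ if every vertex of $A$ has a neighbour in $S$, and let $\gamma_t(G;A)$ denote the minimum size of such an $S$, with the convention $\infty$ if $A$ contains an isolated vertex. If $G$ has an isolated vertex $w$, then $I(G)$ is a cone on $w$ and hence contractible, so that case is harmless. The claim to prove is then: if every $S$ that totally dominates $V(G)$ has $|S|>2k$, then $I(G)$ is $(k-1)$-connected.

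The induction step runs as follows. Pick an edge $uv$ and use the edge-deletion decomposition
\[
I(G-uv)=I(G)\cup \bigl(\sigma_{uv}*I(G\setminus(N(u)\cup N(v)))\bigr),
\]
where $\sigma_{uv}$ is the edge simplex. Alternatively, apply the vertex decomposition above repeatedly. The quantitative input is then two facts about total domination. First, $\gamma_t(G\setminus v)\ge\gamma_t(G)-1$ whenever $G\setminus v$ has no isolated vertices: adding back any neighbour of $v$ (which exists) turns a total dominating set of $G\setminus v$ into one of $G$. Second, $\gamma_t(G\setminus N[v])\ge\gamma_t(G)-2$: if $T$ totally dominates $G\setminus N[v]$, then $T\cup\{v,u\}$ with $u\in N(v)$ totally dominates $G$, since $v$ dominates $N(v)$, $u$ dominates $v$, and $T$ dominates the rest. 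The second inequality is exactly where the factor $2$ in $2k$ comes from. Applying it with $\gamma_t(G)>2k$ gives $\gamma_t(G\setminus N[v])>2(k-1)$, so the link term is $(k-2)$-connected by induction.

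The main obstacle is the deletion term $I(G\setminus v)$. A single deletion only drops $\gamma_t$ by $1$, which is fine, but we need $I(G\setminus v)$ to be $(k-1)$-connected, the same level as for $I(G)$, so induction on $k$ alone does not apply. My first attempt would be a double induction on $(k,|V|)$, deleting vertices one at a time. Each deletion keeps the bound $\gamma_t>2k$ unless it makes some vertex isolated, which can only happen when a neighbour of $v$ has degree $1$; in that case the complex is a cone and is contractible. Eventually we reach a graph in which removing any vertex lowers $\gamma_t$ to $2k$. There I would instead choose $v$ so that $G\setminus v$ stays within the hypothesis. Failing that, I would fall back on Meshulam's homological argument via the nerve/link spectral sequence, and then obtain simple connectivity separately for $k\ge2$ by the van Kampen step. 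That gives the full homotopical statement, as in \cite{c}, \cite{m}.
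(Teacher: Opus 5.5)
The paper gives no proof of this statement; it is quoted from Chudnovsky and Meshulam. So your argument has to stand on its own, and it does not: the deletion term of your vertex decomposition is never controlled.

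\textbf{The gap.} You assert that deleting a vertex keeps $\gamma_t>2k$ unless it creates an isolated vertex. This is false. Your own inequality only gives $\gamma_t(G\setminus v)\ge\gamma_t(G)-1$, and that drop really happens. For $k=1$, the cycle $C_5$ has $\gamma_t(C_5)=3>2$. Yet for every vertex $v$, the graph $C_5\setminus v\cong P_4$ has no isolated vertex and $\gamma_t(P_4)=2$. So the double induction on $(k,|V|)$ gets stuck at once. Your rescue, ``choose $v$ so that $G\setminus v$ stays within the hypothesis'', is impossible for such $\gamma_t$-vertex-critical graphs. The relative parameter $\gamma_t(G;A)$ is introduced but never used; your restated claim is just the original one with $A=V$. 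The final fallback to a nerve/link spectral sequence points to the literature rather than giving an argument.

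\textbf{The missing idea.} Use the edge-deletion decomposition you already wrote down, in the direction from $G-uv$ to $G$, and induct on $(k,|E|)$ instead of $(k,|V|)$. Since $G-uv$ is a spanning subgraph of $G$, every total dominating set of $G-uv$ totally dominates $G$. Hence $\gamma_t(G-uv)\ge\gamma_t(G)$: edge deletion never lowers $\gamma_t$.

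Put $H=G\setminus(N(u)\cup N(v))$ and $L=I(H)$, and take $k\ge 1$.
\begin{itemize}
\item If $T$ totally dominates $H$, then $T\cup\{u,v\}$ totally dominates $G$. So either $\gamma_t(H)>2(k-1)$, or $H$ has an isolated vertex and $L$ is a cone. The case $H=\emptyset$ would force $\gamma_t(G)\le 2$, which is impossible. In every case $L$ is $(k-2)$-connected, so $\Sigma L$ is $(k-1)$-connected.
\item We have $I(G)\cap(\sigma_{uv}*L)=\partial\sigma_{uv}*L=\Sigma L$, and $\sigma_{uv}*L$ is contractible. So $I(G-uv)$ is $I(G)$ with a cone attached along $\Sigma L$.
\item Hence the pair $(I(G-uv),I(G))$ is $k$-connected, and $\pi_i(I(G))\to\pi_i(I(G-uv))$ is injective for $i\le k-1$.
\item $I(G-uv)$ is $(k-1)$-connected by induction on $|E|$, or is a cone if $G-uv$ has an isolated vertex. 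Therefore $I(G)$ is $(k-1)$-connected.
\item The base case $E=\emptyset$ gives a simplex, which is contractible.
\end{itemize}
This standard edge-deletion argument gives the homotopical statement directly, with no separate van Kampen step.
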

	
	Computing the total domination number of a graph is in general difficult, with exact values known only in special cases. Consequently, one often relies on general estimates in terms of basic graph invariants.
	Recall that the \emph{order} of a graph is the number of its vertices, while the \emph{degree} of a vertex is the number of edges incident to it. The maximum degree of a vertex in a graph $G$ is denoted by $\Delta(G)$.
	We will use the following crude estimate of the total domination number to derive a lower bound on the connectivity of $\VR$.
	
	\begin{thm}[\cite{hy}Theorem 2.11]\label{thm:hy}
		If $G$ is a graph of order $m$ with no isolated vertices, then 
		\[
		\gamma_t(G) \geq \frac{m}{\Delta(G)}.
		\]
	\end{thm}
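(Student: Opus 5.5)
The plan is a double-counting argument on a minimum total dominating set. Let $S\subseteq V(G)$ be a total dominating set with $|S|=\gamma_t(G)$. Since $G$ has no isolated vertices, $V(G)$ is itself a total dominating set, so $S$ exists and $\gamma_t(G)$ is well defined.

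Consider the set of ordered pairs
\[
P=\{(s,v)\ :\ s\in S,\ v\in V(G),\ sv\in E(G)\}.
\]
We bound $|P|$ from both sides.
\begin{itemize}
\item \emph{Lower bound.} By total domination, every vertex $v\in V(G)$ has at least one neighbour in $S$. This includes the vertices of $S$ themselves, since a vertex does not dominate itself. Hence every $v$ contributes at least one pair, and $|P|\ge m$.
\item \emph{Upper bound.} Each $s\in S$ appears in exactly $\deg(s)\le\Delta(G)$ pairs. Hence $|P|\le |S|\,\Delta(G)=\gamma_t(G)\,\Delta(G)$.
\end{itemize}
Combining the two bounds gives $m\le\gamma_t(G)\Delta(G)$. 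Dividing by $\Delta(G)$, which is at least $1$ because there are no isolated vertices, yields $\gamma_t(G)\ge m/\Delta(G)$.

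Equivalently, the open neighbourhoods $N(s)$ for $s\in S$ cover $V(G)$, and each has size at most $\Delta(G)$.

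There is no real obstacle here. The only point needing care is the convention that domination is by open neighbourhoods, so that vertices of $S$ must also be covered by some $N(s)$. This is exactly why the bound has $m$ rather than $m-|S|$ in the numerator.
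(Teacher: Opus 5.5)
Your proof is correct and is essentially the argument the paper gives right after the statement: the open neighbourhoods of a minimum total dominating set cover all $m$ vertices, and each has at most $\Delta(G)$ elements, so $\gamma_t(G)\,\Delta(G)\ge m$. Your explicit double count of the pairs $(s,v)$, the observation that $V(G)$ itself is a total dominating set, and the remark that $\Delta(G)\ge 1$ make that sketch fully rigorous.
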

	
	The estimate in Theorem \ref{thm:hy} follows directly from the definition of a total dominating set. Namely, every vertex of $G$ lies in the open neighborhood of at least one vertex in a total domination set. Consequently, a total dominating set of cardinality $\gamma_t(G)$ can collectively dominate at most $\gamma_t(G)\,\Delta(G)$ vertices, implying the inequality $\gamma_t(G)\,\Delta(G)\ge m$. Equality holds precisely when each vertex is adjacent to exactly one vertex of the total dominating set. An example illustrating this extremal case is shown in Figure~\ref{fig:tds2}. In this graph, the total dominating set $\{S_1,\ldots,S_{\gamma_t(G)}\}$ consists of vertices each having degree $\Delta(G)$. The graph decomposes into isomorphic components, each containing two vertices from the dominating set and $2\Delta(G)$ vertices in total. The inequality in Theorem~\ref{thm:hy} becomes strict if some of the termini of the vertical edges coincide.
	
	\begin{figure}[ht]
		\centering
		\begin{tikzpicture}[
			scale=0.8,
			thick,
			vertex/.style={circle,fill,inner sep=1.5pt},
			]
			\node[vertex,label=above:$S_1$] (S1) at (0,1.8) {};
			\node[vertex,label=above:$S_2$] (S2) at (3,1.8) {};
			\node[vertex,label=above:$S_3$] (S3) at (6,1.8) {};
			\node[vertex,label=above:$S_4$] (S4) at (9,1.8) {};
			
			\foreach \x in {-0.6,0,0.6} {
				\node[vertex] at (\x,-2) {};
				\draw (S1) -- (\x,-2);
			}
			
			\foreach \x in {2.4,3,3.6} {
				\node[vertex] at (\x,-2) {};
				\draw (S2) -- (\x,-2);
			}
			
			\foreach \x in {5.4,6,6.6} {
				\node[vertex] at (\x,-2) {};
				\draw (S3) -- (\x,-2);
			}
			
			\foreach \x in {8.4,9,9.6} {
				\node[vertex] at (\x,-2) {};
				\draw (S4) -- (\x,-2);
			}
			
			\draw (S1) -- (S2);
			\draw (S3) -- (S4);
			
			\draw[dashed] (10.2,0) -- (11.2,0);
			
		\end{tikzpicture}
		\caption{A graph achieving equality in the total domination bound.}
		\label{fig:tds2}
	\end{figure}
	
	Since the Vietoris--Rips complex $\VR$ is the independence complex of the complementary graph $G_{n,r}^c$, we apply Theorems \ref{thm:ch} and \ref{thm:hy} to the graph $G_{n,r}^c$. We start by calculating the degree of a vertex in $G_{n,r}^c$. 
	
	\begin{lem}\label{lem:order} 
		The degree of any vertex of $G_{n,r}^c$ is
		\[
		\sum_{i=r+1}^n \binom{n}{i}. 
		\]
	\end{lem}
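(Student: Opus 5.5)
The plan is to count, for a fixed vertex $a\in Q_n$, the vertices $b$ adjacent to $a$ in $G_{n,r}^c$, and to organise the count by Hamming distance. By definition of the complement, $a$ and $b$ are adjacent in $G_{n,r}^c$ exactly when $b\neq a$ and $a,b$ are not adjacent in $G_{n,r}$. By definition of $G_{n,r}$, non-adjacency means $d_H(a,b)>r$, that is, $d_H(a,b)\ge r+1$. The condition $b\ne a$ is automatic once $r\ge 0$, since $d_H(a,a)=0\le r$. So the neighbourhood of $a$ in $G_{n,r}^c$ is
\[
N(a)=\{\,b\in Q_n \mid r+1\le d_H(a,b)\le n\,\}.
\]

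Next I partition $N(a)$ by the exact distance $i=d_H(a,b)$, where $r+1\le i\le n$. Identify $Q_n$ with $\{0,1\}^n$. A vertex $b$ with $d_H(a,b)=i$ is determined uniquely by the set of $i$ coordinates in which it differs from $a$: flip exactly those coordinates of $a$. Every $i$-subset of $\{1,\dots,n\}$ arises in this way. This gives a bijection between the sphere of radius $i$ about $a$ and the $i$-element subsets of an $n$-set, so that sphere has $\binom{n}{i}$ elements.

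Finally, summing over the disjoint spheres for $i=r+1,\dots,n$ gives $|N(a)|=\sum_{i=r+1}^n\binom{n}{i}$. This expression does not depend on $a$, which also shows that $G_{n,r}^c$ is regular. If $r\ge n$ the sum is empty and the degree is $0$, which is consistent.

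There is no real obstacle. The only points needing care are that the complement excludes loops, which is harmless here as noted above, and that the bijection between distance-$i$ vertices and $i$-subsets is stated explicitly.
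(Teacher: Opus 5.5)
Your proof is correct and takes the same approach as the paper. You fix a vertex, note that adjacency in $G_{n,r}^c$ means Hamming distance at least $r+1$, count $\binom{n}{i}$ vertices at each distance $i$, and sum; the explicit bijection with $i$-subsets and your remark about loops are harmless extra detail.
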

	
	\begin{proof}
		Fix a vertex $v \in Q_n$. For each integer $i$, there are exactly $\binom{n}{i}$ vertices $w\in Q_n$ such that $d_H(v,w)=i$. By the definition of $G_{n,r}^{c}$, a vertex $w$ is adjacent to $v$ if and only if $d_H(v,w)\ge r+1$. Summing over all such values of $i$ gives the stated formula.
	\end{proof}
	
	\begin{thm} \label{thm:main} {\rm
			Let 
			\[\alpha_{n,r} = \dfrac{2^{n-1}}{\sum_{i=r+1}^n \binom{n}{i} }.\]
			Then the Vietoris--Rips complex $\VR$ is $(k-1)$-connected, where
			\[
			k=	\begin{cases}
				\Big[ \alpha_{n,r} \Big] & \mbox{ if } \alpha_{n,r}\notin\mathbb Z \\
				\alpha_{n,r}  -1 & \mbox{ if }  \alpha_{n,r}\in\mathbb Z.
			\end{cases}
			\]
		}
	\end{thm}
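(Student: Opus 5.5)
Since $\VR = I(G_{n,r}^c)$, the plan is to combine Theorem~\ref{thm:ch} with Theorem~\ref{thm:hy}, applied to the graph $G=G_{n,r}^c$. Three inputs are needed:
\begin{enumerate}
\item[(i)] $G_{n,r}^c$ has no isolated vertices;
\item[(ii)] its order is $m=2^n$;
\item[(iii)] its maximum degree is $\Delta=\sum_{i=r+1}^n\binom{n}{i}$.
\end{enumerate}
Item (iii) follows from Lemma~\ref{lem:order}, since the graph is regular. For (i), we assume $r<n$, so that the antipodal vertex at distance $n\ge r+1$ is always a neighbour. If $r\ge n$, the complex is a full simplex and hence contractible, so that case is trivial or excluded; I would note this at the start.

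Theorem~\ref{thm:hy} then gives
\[
\gamma_t(G_{n,r}^c)\ \ge\ \frac{2^n}{\sum_{i=r+1}^n\binom{n}{i}} = 2\alpha_{n,r}.
\]
To apply Theorem~\ref{thm:ch} with a given integer $k$, we need the strict inequality $\gamma_t>2k$. It suffices to have $2\alpha_{n,r}>2k$, that is, $\alpha_{n,r}>k$. We then pick the largest integer $k$ with $k<\alpha_{n,r}$:
\begin{itemize}
\item if $\alpha_{n,r}\notin\mathbb Z$, this is $k=[\alpha_{n,r}]$ (the integer part);
\item if $\alpha_{n,r}\in\mathbb Z$, this is $k=\alpha_{n,r}-1$.
\end{itemize}
This exactly matches the two cases in the statement. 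Theorem~\ref{thm:ch} then yields that $I(G_{n,r}^c)=\VR$ is $(k-1)$-connected.

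The only delicate point is the strictness in the integral case. The bound $\gamma_t\ge 2\alpha_{n,r}$ does not give $\gamma_t>2\alpha_{n,r}$, which is why we must drop to $\alpha_{n,r}-1$ there. I would also remark that one could try to exploit integrality of $\gamma_t$ to gain slightly more, but the stated result does not require it. Beyond that, the argument is a direct substitution, with care about the side conditions (no isolated vertices, $k\ge 0$) being the main thing to check.
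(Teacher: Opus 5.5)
Your proposal is correct and follows essentially the paper's proof: Theorem~\ref{thm:hy} with Lemma~\ref{lem:order} gives $\gamma_t(G_{n,r}^c)\ge 2\alpha_{n,r}>2k$, and Theorem~\ref{thm:ch} then yields the connectivity. Your side checks are sound and slightly more careful than the paper's write-up: $r<n$ guarantees no isolated vertices, and the drop to $\alpha_{n,r}-1$ is needed for strictness in the integral case.
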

	\begin{proof}
		By Theorem \ref{thm:hy}, together with Lemma~\ref{lem:order}, we obtain
		\[
		\gamma_t(G_{n,r}^c) \geq \dfrac{2^{n}}{\sum_{i=r+1}^n \binom{n}{i} }=2  \alpha_{n,r} > 2k.
		\] 
		It follows from Theorem \ref{thm:ch} that  $\VR=I(G^c_{n,r})$ is $(k-1)$-connected.
	\end{proof}
	
	The connectivity bound given in Theorem~\ref{thm:main} is  coarse. For instance, if $r$ is fixed, the lower bound on the connectivity tends to zero as $n\to\infty$. Nevertheless, the estimate is strong enough to provide an infinite family of examples for which the connectivity of $\VR$ exceeds $r$. Moreover, these bounds suggest that the connectivity of $\VR$ does not grow linearly with respect to either $n$ or $r$, and they illustrate that for certain values of $n$ and $r$, Vietoris--Rips complexes are highly connected.  
	
	To illustrate this behavior, we list several explicit examples of the connectivity bounds obtained from Theorem~\ref{thm:main}.
	
	\begin{table}[ht]
		\centering
		\begin{tabular}{c c c }
			\hline
			$n$ & $r$ & lower bound on connectivity \\
			\hline
			7  & 5  & 6   \\
			8  & 6  & 13  \\
			9  & 7  & 24  \\
			12 & 10 & 156 \\
			18 & 15 & 761 \\
			18 & 16 & 6897 \\
			20 & 16 & 387 \\
			20 & 18 & 24964 \\
			\hline
			&&\\
		\end{tabular}
		\caption{Lower bounds on the connectivity of $\VR$}
		\label{table:connectivity}
	\end{table}   
	
	We emphasise that the connectivity bound of Theorem~\ref{thm:main} does not contradict the known non-triviality of $H_{2^r-1}(\VR;\nz)$ established in~\cite{av}.  
	
	The large connectivity values obtained here arise from a very crude estimate of the total domination number. We expect that the actual total domination numbers of the graphs $G^c_{n,r}$ are often significantly larger than those predicted by Theorem~\ref{thm:hy}. For example, computer calculation of Feng~\cite{av}, shows that $VR(Q_6;4)$ is exactly $6$-connected, that is,  that $H_7(VR(Q_6; 4);\nz) \neq 0$.  This implies that $\gamma_t(G^c_{6,4})\leq 16.$ In contrast, Theorem~\ref{thm:main} implies only the weaker conclusion that $\gamma_t(G^c_{6,4})\geq 8$ and hence that $VR(Q_6;4)$ is at least $3$-connected. It would be interesting to determine how close the total domination number $\gamma_t(G^c_{6,4})$ is to $16$.
	
	In general, describing total dominating sets of an arbitrary graph is a difficult problem. However, hypercube graphs have a high degree of symmetry and one might hope that the geometry of hypercubes and their duals, the cross polytopes, could shed light on particularly nice structures of total dominating sets in this setting. We conclude this section by describing a connection between certain non-trivial homology classes in $H_*(\VR;\nz)$ and total dominating sets.
	
	\begin{prop} 
		Suppose a non-trivial homology class $a \in H_{m-1}(\VR;\nz) $ is represented by a cycle $\alpha$, which is the boundary of a cross polytope on $2m$ vertices.  Then the set of vertices of $\alpha$ is a total dominating set of $G_{n,r}^c$.
	\end{prop}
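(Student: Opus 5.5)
The plan is to use the combinatorial structure of the cross polytope together with the fact that $\VR$ is a clique complex (flag complex). Nontriviality of $a$ will then force every vertex of $Q_n$ to be far, in Hamming distance, from at least one vertex of $\alpha$. Write the vertex set of $\alpha$ as $\{v_1,w_1,\ldots,v_m,w_m\}$, where $\{v_i,w_i\}$ are the antipodal pairs. The faces of the boundary of the cross polytope are exactly the subsets containing at most one vertex from each pair. So $\alpha=\partial[v_1,w_1]*\cdots*\partial[v_m,w_m]$ (join of $0$-spheres), and all these faces are simplices of $\VR$. I have to show that every $u\in Q_n$ has some vertex $x$ of $\alpha$ with $d_H(u,x)\ge r+1$, since that is exactly adjacency of $u$ and $x$ in $G^c_{n,r}$. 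I split into the cases $u\notin\alpha$ and $u\in\alpha$.

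\emph{Vertices outside $\alpha$.} Suppose some $u\notin\{v_i,w_i\}$ satisfies $d_H(u,x)\le r$ for every vertex $x$ of $\alpha$. Then for every face $\sigma$ of $\alpha$, the set $\sigma\cup\{u\}$ has diameter at most $r$, so it is a simplex of $\VR$. Hence the cone $u*\alpha$ is a subcomplex of $\VR$, and $\alpha=\partial(u*\alpha)$ up to sign as simplicial chains. This makes $a=0$, a contradiction. So $u$ has a $G^c_{n,r}$-neighbour in $\alpha$.

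\emph{Vertices of $\alpha$.} For $v_i$ I will show that $w_i$ dominates it, i.e.\ $d_H(v_i,w_i)\ge r+1$; the case of $w_i$ is symmetric. Suppose instead that $d_H(v_i,w_i)\le r$, and set $\beta=\ast_{j\ne i}\partial[v_j,w_j]$, so that $\alpha=\partial[v_i,w_i]*\beta$. For any face $\tau$ of $\beta$, the sets $\tau\cup\{v_i\}$ and $\tau\cup\{w_i\}$ are simplices of $\VR$, and $v_iw_i$ is an edge. Because $\VR$ is the clique complex of $G_{n,r}$, the set $\tau\cup\{v_i,w_i\}$ is therefore a simplex. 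So the chain $[v_i,w_i]*\beta$ lies in $\VR$ and has boundary $\pm\alpha$ (since $\beta$ is a cycle). Again $a=0$, a contradiction. Hence $v_i$ and $w_i$ are adjacent in $G^c_{n,r}$.

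Combining the two cases, every vertex of $G^c_{n,r}$ has a neighbour among the vertices of $\alpha$, which is the definition of a total dominating set. The only step needing some care is the second one: it uses the flag property of $\VR$ and the chain-level identity $\partial([v_i,w_i]*\beta)=\partial[v_i,w_i]*\beta\pm[v_i,w_i]*\partial\beta$ with $\partial\beta=0$. Otherwise the argument is just two coning constructions.
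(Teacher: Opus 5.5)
Your proof is correct and follows the paper's argument: a vertex $u$ at Hamming distance at most $r$ from every vertex of $\alpha$ would let you cone off $\alpha$ inside $VR(Q_n;r)$, so $\alpha$ would bound. The only difference is that the paper simply takes the antipodal pairs $\{v_i,w_i\}$ to be non-edges of $VR(Q_n;r)$, so that each vertex of $\alpha$ is dominated by its partner, whereas you prove this from the flag property via the chain $[v_i,w_i]*\beta$; this fills in a step the paper leaves implicit.
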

	\begin{proof}
		Let the vertices of $\alpha$ be $\mathcal{C}=\{ v_1,w_1,v_2,w_2, \ldots, v_m,w_m\}$, where  $\{v_i,w_i\}$ is not an edge in $\VR$, while every other pair of vertices in $\mathcal C$ spans an edge. Equivalently, the vertices $\{v_i,w_i\}$ in $\mathcal C$ represent a matching edge set in $G_{n,r}^c$. If $\mathcal{C}$ were not a total dominating set, then there would be a vertex $u$ not adjacent to any vertex of $\mathcal{C}$ in $G_{n,r}^c$. This would imply that $u *\alpha$ is a subcomplex of $\VR$, contradicting the assumption that $\alpha$ represents a non-trivial homology class. Hence $\mathcal C$ is a total dominating set of $G_{n,r}^c$.
	\end{proof}

	\subsection{Graph Laplacian}
	The \emph{graph Laplacian} provides a spectral framework for estimating the domination type invariants of graphs, including the total domination number. Although computing $\gamma_t(G)$ is NP-hard in general, spectral methods often provide effective bounds that depend only on eigenvalues of natural matrices associated with $G$. These bounds are particularly appealing in highly symmetric or large graphs, where exact combinatorial methods are infeasible.
	
	The Laplacian matrix of a graph encodes both local and global structural information. Its eigenvalues reflect fundamental properties such as vertex degrees, expansion, and connectivity, and they have been successfully used to bound a variety of graph parameters, including independence, domination, and matching numbers. In particular, several results relate domination-type invariants to the largest Laplacian eigenvalue and to algebraic connectivity, linking domination theory to spectral graph theory (see, for example,\cite{chung, biggs}).

	\begin{defin}
		Let $G=(V(G),E(G))$ be a connected graph with vertex set $V(G)=\{v_1, \dots, v_n\}$. The \emph{Laplacian} of $G$ is the $n\times n$ matrix $L(G)=(L_{ij})$ defined by
		\[
		L_{ii}=d_i \quad \text{and} \quad
		L_{ij}=
		\begin{cases}
			-1 & \text{if } ij\in E(G)\\
			0 & \text{if } ij\notin E(G)
		\end{cases}
		\]
		where $d_i$ denotes the degree of the vertex $v_i$.
	\end{defin}
	Aharoni, Berger and Meshulam~\cite[Theorem 4.1]{abm} established a lower bound on the connectivity of the independence complex $I(G)$ in terms of the largest eigenvalue of the Laplacian matrix. Writing $\lambda(G)$ for the largest eigenvalue of $L(G)$, they proved
	\begin{equation}
		\label{eq:abm}
		\mathrm{Conn}(I(G)) \ge \frac{|V(G)|}{\lambda(G)} - 2.
	\end{equation}
	
	Anderson and Morley~\cite{AM} showed that $\lambda(G) \leq 2\Delta(G)$ (see also~\cite[Theorem 3.1]{z}). 
	Substituting this estimate into~\eqref{eq:abm} gives
	\[
	\mathrm{Conn}(I(G))  \geq \frac{|V(G)|}{\lambda(G)} - 2
	\,\geq\, \frac{|V(G)|} {2 \Delta({G})} -2.
	\]
	This bound is implied by Theorem~\ref{thm:ch}, which relates connectivity directly to the total domination number. However, when the Laplacian spectrum of $G$ can be computed explicitly, the inequality~\eqref{eq:abm} often yields strictly stronger connectivity estimates than those obtained from degree considerations alone.
	
	\begin{exam}
		Consider the Vietoris--Rips complex $VR(Q_7;5)$. By Theorem~\ref{thm:main}, the total domination estimate yields a lower bound
		\[
		\mathrm{Conn}\bigl(VR(Q_7;5)\bigr) \ge 6
		\]
		(see Table~\ref{table:connectivity}).  
		Since $VR(Q_7;5)=I(G^c_{7,5})$, we may alternatively calculate the spectral bound. The graph $G^c_{7,5}$ has vertex set
		$Q_7$, hence $|V(G^c_{7,5})|=2^7=128$, and its Laplacian matrix has largest
		eigenvalue $\lambda(G^c_{7,5})=14$. Substituting into~\eqref{eq:abm} gives
		\[
		\mathrm{Conn}\bigl(VR(Q_7;5)\bigr)
		=
		\mathrm{Conn}\bigl(I(G^c_{7,5})\bigr)
		\ge \frac{128}{14}-2 > 7.
		\]
		Therefore $VR(Q_7;5)$ is at least $8$-connected.
	\end{exam}

	\section{toric topological Constructions for Simplicial Complexes}

	Simplicial complexes can be studied through a variety of complementary structures, ranging from topological invariants such as homology and homotopy, to combinatorial features such as links and missing faces, and to algebraic objects encoding their face structure. 
	
	Toric topology provides a unifying framework that naturally integrates these topological, combinatorial, algebraic, and geometric perspectives. By functorially associating spaces and algebraic invariants to simplicial complexes, toric topology allows one to study their topology through combinatorial data and to interpret algebraic constructions, such as Stanley--Reisner rings, as cohomological invariants of canonically associated spaces. This interplay makes toric topology especially well suited for the study of highly structured simplicial complexes, such as Vietoris--Rips complexes of the hypercube.
	
	We start by recalling   the Stanley--Reisner ring, a fundamental algebraic construction that assigns to each simplicial complex a graded commutative ring whose defining ideal reflects the combinatorics of the complex.
	
	Let $K$ be a simplicial complex with vertex set $[m]=\{0,1,\ldots,m-1\}$. The \emph{Stanley--Reisner ring} (or face ring) of $K$ over a commutative ring $\Bbbk$ with unity is defined as
	\[
	SR[K]
	=
	\Bbbk[x_0,\ldots,x_{m-1}]/\mathcal{I}_K
	\]
	where the Stanley--Reisner ideal $\mathcal{I}_K$ is generated by the square-free monomials
	\[
	x_I = x_{i_1}\cdots x_{i_t}
	\quad\text{for all subsets } I=\{i_1,\ldots,i_t\}\subseteq[m] \text{ with } I\notin K.
	\]
	Throughout this paper, we regard $SR[K]$ as a graded ring by assigning degree $|x_i|=2$ with each generator.
	
	From a combinatorial viewpoint, the Stanley--Reisner ideal is completely determined by the minimal missing faces of $K$. In particular, if $K$ is the clique complex of a graph, then all minimal non-faces are edges, and the ideal $\mathcal{I}_K$ is generated by quadratic monomials.

	Building on the algebraic description of simplicial complexes provided by Stanley--Reisner rings, we now introduce the principal toric topological
	construction that will be used throughout the paper. With each simplicial complex $K$, in toric topology, we  associate a canonical topological space, the \emph{moment-angle complex} $\mathcal Z_K$. This construction provides a
	bridge between the combinatorics of $K$, the algebraic properties of its Stanley--Reisner ring, and the topology of an explicitly defined space. A fundamental feature of this construction is that the cohomology of $K$ appears naturally as a direct summand of the cohomology of $\mathcal Z_K$.  As a
	consequence, questions about the cohomology of simplicial complexes can often
	be approached by analysing the richer algebraic and topological structure of
	their associated moment-angle complexes. In this section we recall the
	definition of $\mathcal Z_K$ and the algebraic description of its cohomology in
	terms of Tor algebras and full subcomplexes of $K$, following the work of
	Hochster, Buchstaber--Panov, and Baskakov (see~\cite{bp} for more details).
	
	Only in the following section will we specialise these constructions to Vietoris--Rips complexes of the hypercube, where the combinatorial structure of $K$ allows for a more explicit description of the Stanley--Reisner ideal and
	effective control of the associated Tor algebra.

	Let $K$ be a simplicial complex on the vertex set $[m]=\{0, \dots, m-1\}$.  The \emph{moment-angle  complex} of $K$, denoted by $\mathcal{Z}_K$, is defined as the colimit
	\[ 
	\zk=\bigcup_{\sigma \in K}(D^2, S^1)^\sigma \subset \prod_{i=0}^{m-1}D^2,
	\quad \text{where }(D^2,S^1)^\sigma = \prod_{i=0}^{m-1} Y_i, \quad  Y_i = \begin{cases} D^2 & \text{for }i \in \sigma \\ S^1 & \text{for }i \notin \sigma. \end{cases}
	\]
	
	Buchstaber and Panov~\cite{bp2}, and independently Franz~\cite{franz} showed that the
	cohomology of a moment-angle complex admits a purely algebraic description in
	terms of the Tor algebra of the Stanley--Reisner ring. More precisely, one has an isomorphism
	\[
	H^{2j-i}(\zk; \Bbbk)\cong \Tor^{-i,2j}_{\Bbbk[x_0, \cdots, x_{m-1}]}(SR[K],\Bbbk)
	\]
	where each generator $x_i$ is assigned degree $2$.

	For a subset $J\subset [m]$, let $K_J$ denote the full subcomplex of $K$ on the vertex set $J$, that is, the simplicial complex consisting of all simplices of $K$ with vertices in $J$. Back in 1977, in his seminal work, Hochster~\cite[Theorem 5.2]{Ho} identified the $\Bbbk$-module decomposition of the Tor algebra as
	\[
	\Tor^{-i}_{\Bbbk[x_0, \ldots,x_{m-1}]}(SR[K],\Bbbk) \cong  \bigoplus_{\substack{J\subset[m]\\|J|=j}}\widetilde{H}^{j-i-1}(|K_J|;  \Bbbk).
	\]
	This result was later refined by Baskakov~\cite{baskakov}, who showed that the direct sum
	$\bigoplus_{J\subset[m]} \widetilde H^{\ast}(|K_J|;\Bbbk)$ carries a natural ring
	structure. The product is defined by the $*$-pairing
	\begin{equation}\label{starp} 
		\widetilde{H}^{|J|-i-1}(|K_J|; \Bbbk) \otimes \widetilde{H}^{|L|-s-1}(|K_L|; \Bbbk) \mapright{\star} \widetilde{H}^{|J|+|L|-(i+s)-1}(|K_{J \cup L}|; \Bbbk)
	\end{equation} 
	induced by the simplicial inclusion $K_{J \cup L} \longrightarrow K_J \ast K_L$ when $J \cap L = \emptyset$ and zero otherwise. By convention, $\Sigma K_{\emptyset}=S^0$.     
	
	Combining these results (see, for example, \cite[Theorems~3.2.9 and 4.5.8]{bp}), the cohomology ring of the moment-angle complex can be expressed in terms of full subcomplexes of $K$ as
	\[ 
	H^{\ast}(\zk;\Bbbk) \cong \bigoplus_{J \subset [m]} \widetilde{H}^{\ast}(|K_J|;\Bbbk).
	\]	
	
	In this paper, we focus both on the Hochster decomposition and on the explicit calculation of the Tor algebra $\Tor^{-i,2j}_{\Bbbk[x_0, \dots, x_{m-1}]}(SR[K],\Bbbk)$.
	To this end, we recall the Koszul resolution of $\Bbbk$ as a module over $\Bbbk[x_0,\ldots,x_{m-1}]$, where $\Bbbk$ is equipped with the trivial module structure, that is, each $x_i$ acts trivially on $\Bbbk$.
	
	Let $\Lambda(u_0,\ldots,u_{m-1})$ denote the exterior algebra generated by $\{u_0,\ldots,u_{m-1}\}$. The \emph{Koszul resolution} is the bigraded differential algebra
	\[
	\mathcal K
	=
	\Lambda(u_0,\ldots,u_{m-1})
	\otimes_{\Bbbk}
	\Bbbk[x_0,\ldots,x_{m-1}]
	\]
	with bidegrees
	\[
	\mathrm{bideg}(u_i)=(-1,2),
	\qquad
	\mathrm{bideg}(x_i)=(0,2)
	\]
	and differential defined by
	\[
	d(u_{i_1}\cdots u_{i_s}\otimes x_{j_1}\cdots x_{j_t})
	=
	\sum_{\ell=1}^s
	(-1)^{\ell+1}
	u_{i_1}\cdots\widehat{u}_{i_\ell}\cdots u_{i_s}
	\otimes
	x_{i_\ell}x_{j_1}\cdots x_{j_t}
	\]
	where $\widehat{u}_{i_\ell}$ indicates omission.
	Together with the augmentation map $\epsilon\colon\mathcal{K}\to \Bbbk$, the Koszul resolution is a resolution of $\Bbbk$.
	
	The associated \emph{Koszul complex} is the bigraded differential algebra
	\[
	\mathcal{K} \otimes_{\kv} SR[K] \cong \Lambda(u_0,\dots,u_{m-1}) \otimes SR[K]
	\]
	with the differential induced by the differential in the Koszul resolution. Its cohomology computes the Tor algebra. In particular, there is an isomorphism of bigraded algebras
	\[
	\Tor^{t,j}_{\kv}(SR[K],\Bbbk) \cong H^{t,j}(\Lambda(u_0,\dots,u_{m-1})\otimes SR[K])
	\]
	which agrees, up to sign, with the $*$-product described in~\eqref{starp}.
	
	For a generator of the Koszul complex
	\[
	\omega=u_{j_1}\cdots u_{j_t} \otimes x_{i_1}\cdots x_{i_s}
	\]
	we define the \emph{support} of $\omega$, denoted by $\supp(\omega)$, to be the subset
	\[
	\supp(\omega)=\{j_1, \ldots, j_t, i_1, \ldots, i_s\} \subset [m].
	\]
	
	For a fixed subset $J\subset[m]$, we define the \emph{Koszul complex with support $J$} to be the subcomplex
	\[
	\Lambda(u_0,\dots,u_{m-1}) \otimes SR[K]\Big|_J
	\]
	of $\Lambda(u_0,\ldots,u_{m-1})\otimes SR[K]$ generated by all elements whose support is exactly $J$.

	As shown in the proof of~\cite[Theorem 3.2.9]{bp}, the cohomology of this subcomplex is naturally identified with the reduced cohomology of the suspension of the full subcomplex $K_J$. More precisely, one has an isomorphism
	\[ 
	H^{|J|-t} (\Sigma K_J; \Bbbk) \cong  H^{-t,2|J|} (\Lambda(u_0,\dots,u_{m-1}) \otimes SR[K]\Big|_J).
	\]
	
	In particular, taking $J=[m]$ and noting that $K_{[m]}=K$, we have
	\[
	\widetilde{H}^{m-t}(\Sigma K; \Bbbk) \cong H^{-t,2m}(\Lambda(u_0,\ldots,u_{m-1}) \otimes SR[K]\Big|_{[m]}).
	\]
	Consequently, a cycle in the $t$th stage of the Koszul complex with full support $[m]$, represented by an element of the form
	\[
	u_{j_1}\cdots u_{j_t}\, x_{i_1}\cdots x_{i_s} + \cdots
	\]
	represents a cohomology class in $\widetilde{H}^s(\Sigma K; \Bbbk)$ as $m-t=s$. In particular, the number of variables $x_i$ appearing in such a representative coincides with the cohomological degree of the corresponding class in the suspension $\Sigma K$.

	\section{Coconnectivity of $\VR$}\label{coconnectivity}
	
	In the previous sections we established lower bounds on the connectivity of Vietoris--Rips complexes of the hypercube, showing in particular that these complexes can be highly connected for a wide range of parameters. In this section we study the complementary problem of bounding cohomology from above.
	
	\begin{defin}
		Let $K$ be a simplicial complex. We define the \emph{coconnectivity} of $K$ to be
		the largest integer $d$ such that
		\[
		H^{d-1}(K;\mathbb{Z}) \neq 0.
		\]
		Equivalently, $K$ has coconnectivity at most $d$ if
		\[
		H^{i}(K;\mathbb{Z}) = 0 \quad\text{for all } i\ge d.
		\]
		Thus coconnectivity measures the top degree in which cohomology is non-trivial,
		or, equivalently, the cohomological dimension of $K$.
	\end{defin}
	
	For Vietoris--Rips complexes $\VR$, lower bounds on connectivity were obtained earlier using domination invariants and spectral methods. By contrast, much less is known about upper bounds on coconnectivity. Prior to this work, the only general bounds arose from the dimension of the simplicial complex itself, since cohomology necessarily vanishes above the dimension of a simplicial complex. A classical result of Kleitman~\cite{k} shows that for any simplex
	$\sigma \in VR(Q_n;r)$,
	\begin{equation}\label{dim}
		|\sigma| \le
		\begin{cases}
			\sum_{i=0}^t \binom{n}{i} & \text{if } r=2t,\\[1ex]
			2 \sum_{i=0}^t \binom{n-1}{i} & \text{if } r=2t+1.
		\end{cases}
	\end{equation}
	While effective, these bounds are coarse and do not reflect finer homotopy theoretical structure of $\VR$.
	
	The main goal of this section is to obtain substantially sharper upper bounds on
	the coconnectivity of Vietoris--Rips complexes by exploiting the algebraic
	structure of their associated Stanley--Reisner rings and the resulting Tor
	algebras.
	
	\begin{thm}\label{thm:coconnect}   
		Let $n>r+1$. Then 
		\[
		H^{\ast}(\VR; \nz) = 0\quad \text{for all } \ast\ge  2^{n-1}-\frac{2^{n-2}S}{2S-1}-1
		\]
		where    
		\[
		S=\sum_{i=r+1}^{n-1} \binom{n}{i}.
		\]
	\end{thm}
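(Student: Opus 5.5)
The plan is to compute $H^\ast(\VR;\nz)$ through the top Hochster summand and to bound the Tor algebra using a free resolution of the Stanley--Reisner ideal. Put $m=2^n$ and $K=\VR$. Since $K$ is the clique complex of $G_{n,r}$, the ideal $\mathcal I_K$ is generated by the quadratic monomials $x_ax_b$, one for each edge $\{a,b\}$ of $G^c_{n,r}$. By Hochster's formula with $J=[m]$, $\widetilde H^{m-i-1}(K)$ is a direct summand of $\Tor^{-i,2m}(SR[K],\Bbbk)$.

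I would compute this Tor with the Taylor resolution of $\mathcal I_K$. In multidegree $x_{[m]}$ the relevant piece is a chain complex $T_{[m]}$ with one basis element for each set $F$ of edges of $G^c_{n,r}$ covering every vertex, i.e.\ each edge cover of $G^c_{n,r}$. A cover with $|F|=i$ sits in homological degree $i$ of the resolution of $SR[K]$. Hence a nonzero class in $\widetilde H^{m-i-1}(K)$ needs nonvanishing homology of $T_{[m]}$ in degree $i$, once the sign and indexing conventions of Section~3 are fixed. Since every edge cover has at least $m/2$ edges, this already gives vanishing in degrees $\ge m/2-1=2^{n-1}-1$. The theorem improves this by $2^{n-2}S/(2S-1)$, so the task is to show that $T_{[m]}$ is acyclic in all degrees $i\le m/2+\frac{m}{4}\cdot\frac{S}{2S-1}$.

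For the improvement I would put an acyclic (discrete Morse) matching on the cover poset, i.e.\ a ``duality'' pairing of subcomplexes of the Taylor complex. Fix a total order on the edges of $G^c_{n,r}$. For each cover $F$, toggle the first edge $e$ such that $F\triangle\{e\}$ is still a cover. Adding an edge is always allowed. Removing $e=\{a,b\}\in F$ is allowed exactly when $a$ and $b$ are both covered by other edges of $F$. The standard lexicographic argument should make this matching acyclic, and the homology of $T_{[m]}$ is then carried by unmatched covers. A cover is unmatched only when every edge preceding its first removable edge lies in $F$ but cannot be removed. So unmatched covers are essentially minimal covers, that is, star forests spanning $[m]$, satisfying an extra rigidity condition.

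The main obstacle is a counting lemma: no such critical star forest has at most $m/2+\frac{m}{4}\cdot\frac{S}{2S-1}$ edges. A spanning star forest with few edges is close to a perfect matching, so it has many components that are single edges $\{a,\bar a\}$. Here the degree of $G^c_{n,r}$ is $S+1$ by Lemma~\ref{lem:order}, the extra $1$ being the antipodal edge $i=n$. I would choose the edge order so that the antipodal perfect matching edges come last, and then exploit the remaining $S$ non-antipodal neighbours of each vertex. If two matched pairs $\{a,b\}$ and $\{c,d\}$ are joined by an edge of $G^c_{n,r}$ that comes earlier in the order, that edge toggles and the cover is matched. So in a critical cover the set $P$ of vertices lying in two-vertex components is constrained: each such vertex has at most about $1/2$ of its $S$ neighbours inside $P$ usable for a toggle. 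A double count of edges between $P$ and the larger stars should give $|P|\le \frac{m}{2}\cdot\frac{S-1}{2S-1}$ or a similar bound. Since the number of edges of a spanning star forest is $m$ minus the number of stars, this would yield $|F|> m/2+\frac m4\cdot\frac{S}{2S-1}$. Getting exactly the constant $S/(2S-1)$ out of this double count, with the right choice of edge order, is the delicate step. If it fails, I would first try ordering the edges by Hamming distance.
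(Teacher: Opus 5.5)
There is a genuine gap, and it is exactly where the content of the theorem lies.

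First, the rule ``toggle the first edge $e$ such that $F\triangle\{e\}$ is still a cover'' does not define a matching. Suppose $e\notin F$, and some earlier $f\in F$ has one endpoint covered only by $f$ (an endpoint $e$ also contains) and its other endpoint covered by another edge of $F$. Then $f$ becomes removable in $F\cup\{e\}$, so $F\cup\{e\}$ toggles $f$, not $e$. You would have to use the sequential, element-by-element matching instead, and then the critical cells are not minimal covers:
\begin{itemize}
\item With your order, every minimal cover avoiding the first edge $e_1$ is matched with $F\cup\{e_1\}$ at the very first step.
\item With the reverse order the paper uses (antipodal edges largest, reverse admissibility), the surviving full-support generators all have the form $e_J\Theta_n$, with $J$ a set of non-antipodal edges (Lemma~\ref{lem:decompostion}). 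That is, they are the antipodal perfect matching plus further edges.
\end{itemize}
These are non-minimal covers, and they occur in degrees $m/2$, $m/2+1,\dots$, inside the range you need to be free of critical cells. The homology vanishes there only because of cancellation in the differential, e.g.\ $\delta(e_{(a,b)}\Theta_n)=\Theta_n$. So the reduction to ``critical star forests'' is unfounded. Your counting lemma, which you only conjecture (``should give \dots\ or a similar bound''), is aimed at the wrong objects. It also asks for more than vanishing of homology, and nothing in it produces the constant $S/(2S-1)$.

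The missing idea is to keep the differential and recognise the reduced complex as the chain complex of a simplicial complex whose connectivity can be bounded topologically. The paper proceeds in four steps:
\begin{enumerate}
\item The full-support part of the Lyubeznik complex is the shifted augmented chain complex of $\mathcal C_{n,r}$.
\item This is enlarged to the flag complex $\mathcal J$: sets of non-antipodal edges whose vertex support contains no antipodal pair.
\item The composite $\widehat{\mathcal C}\to\widehat{\mathcal J}\to\mathcal T$ is Lyubeznik's quasi-isomorphism, so $H^{2^{n-1}-t-1}(\VR)$ is a direct summand of $H_{t-1}(\widehat{\mathcal J})$.
\item $\mathcal J=I(\mathcal G^c)$, where $e_{(a,b)}$ and $e_{(c,d)}$ are adjacent in $\mathcal G^c$ iff $\{c,d\}\cap\{\overline a,\overline b\}\neq\varnothing$. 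This graph has $2^{n-1}S$ vertices and is $(2S-1)$-regular, so Theorems~\ref{thm:hy} and~\ref{thm:ch} bound the connectivity of $\mathcal J$.
\end{enumerate}
The constant $\frac{2^{n-2}S}{2S-1}$ is then just $\tfrac12\,|V(\mathcal G^c)|/\Delta(\mathcal G^c)$. Your observation that each vertex has $S$ non-antipodal neighbours is the right local input. It enters, however, through the degree of $\mathcal G^c$ in a total-domination argument, not through a count of critical cells.

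A smaller point: the trivial bound alone gives vanishing only in degrees $\ge 2^{n-1}$, not $\ge 2^{n-1}-1$. Covers of size $i=m/2$ are perfect matchings, which sit in degree $2^{n-1}-1$, and for $VR(Q_{r+1};r)$ they do carry homology.
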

	To indicate the scale of improvement provided by
	Theorem~\ref{thm:coconnect}, note that a convenient approximation of the bound
	$2^{n-1}-\frac{2^{n-2}S}{2S-1}-1$
	is $3\cdot 2^{n-3}-1$, which is significantly smaller than the maximal simplex dimension predicted by~\eqref{dim} for the same parameters $n$ and $r$.\todo[inline]{for $n$ close to $r$, as the dim has a polynomial growth for fixed $r$, while the estimate has exponential growth}
	\todo[inline]{I think comparing the growth for fixed $r$ is misleading here -- that's the case when $r$ and $n$ are far apart, whereas the bound from Theorem~4.2 is an improvement when $n$ and $r$ are close together. It might be better to replace this with the statement that the dimension of the simplicial complex ?}

	Table~\ref{table:bound} compares the dimension, the bound obtained from Theorem~\ref{thm:coconnect}, and the largest currently known degree in which cohomology is non-trivial, namely $2^r-1$, for the Vietoris--Rips complexes $VR(Q_{r+2};r)$.
	
	\begin{table}[ht]
		\centering
		
		\begin{tabular}{c c c c}
			\hline
			$r$ 
			& $\dim VR(Q_{r+2};r)$ 
			& Theorem~\ref{thm:coconnect} 
			& $2^{r}-1$ \\
			\hline
			8  & 386      & 377      & 255 \\
			10 & 1586     & 1513     & 1023 \\
			18 & 431\,910 & 389\,850 & 262\,143 \\
			\hline
			&&&\\
		\end{tabular}
		\caption{Upper bounds on the coconnectivity of $VR(Q_{r+2};r)$}
		\label{table:bound}
	\end{table}	
	We now turn to the algebraic machinery required to prove Theorem~\ref{thm:coconnect}. This involves a detailed analysis of the Tor algebra associated to the Stanley--Reisner ring of $\VR$, using the Taylor and Lyubeznik resolutions.

	Recall the Koszul complex $(\mathcal{K},d)$ associated to the Stanley--Reisner ring $SR[K]$. To streamline degree bookkeeping in what follows, it is convenient to reindex
	the bidegrees in the Stanely--Reisner ring (and therefore in the Koszul complex) introduced earlier. Concretely, we replace the generators $v_i$ by generators $x_i$, $i=0,\ldots,m-1$, of bidegree $(0,1)$, and replace the generators $w_i$ by generators $u_i$, $i=0,\ldots,m-1$, of bidegree $(-1,1)$. This produces a bigraded differential algebra which is (non-degree-preservingly) isomorphic to the original Koszul complex $(\mathcal{K},d)$. With this convention, the total internal degree counts the number of generators in a monomial, while the homological degree records the Koszul resolution degree. For simplicity of notation, we continue to denote the regraded Koszul complex and the Stanley--Reisner ring by $(\mathcal{K},d)$ and $SR[K]$, respectively.
	
	We now describe the Stanley--Reisner ring of the Vietoris--Rips complexes of the hypercube. The vertex set of the $n$-dimensional hypercube graph, denoted $Q_n$, may be naturally identified with the set of binary strings of length $n$, or equivalently with the integers
	\[
	[2^n]=\{0,1,\ldots,2^n-1\}
	\]
	via binary expansion: a binary string 
	\[
	(i_1,i_2,\ldots,i_n), \text{ where } i_j\in\{0,1\}
	\]
	corresponds to the integer
	\[
	x = 2^{i_n}+2^{i_{n-1}}+\cdots+2^{i_1}.
	\]
	
	For vertices $a,b\in Q_n$, viewed as binary strings, the \emph{Hamming distance} $d_H(a,b)$ is defined as the number of coordinates in which the two strings differ. This distance coincides with the shortest‑path distance in the $n$‑hypercube graph.
	
	Fix integers $n$ and $r\ge 0$ and recall that $G_{n,r}$ denotes the graph with vertex set $Q_n$ in which two vertices $a,b\in Q_n$ are adjacent if and only if $d_H(a,b)\le r$. By definition of diameter, the Vietoris--Rips complex $\VR$ is the clique complex of the graph $G_{n,r}$. In particular, $\VR$ is a flag
	complex, and all its minimal non‑faces are of dimension one.
	
	Using the labeling $Q_n=[2^n]$, the Stanley--Reisner ring of $\VR$ is therefore
	\[
	SR[\VR]
	=
	\Bbbk[x_0,\ldots,x_{2^n-1}]/\mathcal{I}_{\VR}
	\]
	where the Stanley--Reisner ideal $\mathcal{I}_{\VR}$ is generated by the quadratic monomials
	\[
	x_a x_b,
	\quad a,b\in Q_n \text{ such that } d_H(a,b)>r.
	\]	
	
	To obtain upper bounds on the connectivity of $\VR$, we study the Tor algebra 
	\[
	\Tor^{i,j}_{\Bbbk[x_0,\ldots,x_{2^n-1}]}(SR[\VR],\Bbbk)
	\]
	by resolving $SR[\VR]$ by free $\Bbbk[x_0,\ldots,x_{2^n-1}]$-modules known as the \emph{Taylor resolution}~\cite{t} (see also~\cite{jls}). The key feature of this resolution is that it is completely determined by a chosen generating set of the Stanley–Reisner ideal and therefore reflects the combinatorics of the missing faces of the simplicial complex $K$.
	
	We begin by fixing an ordering $\{m_1, \dots, m_r\}$ of monomial generators of the ideal $\mathcal{I}_K$. Let $\Lambda(e_1, \dots, e_r)$ denote the exterior algebra generated by symbols $e_1,\ldots, e_r$, where each $e_i$ corresponds bijectively to the monomial $m_i$. 
	
	The ordering on the generators of $\mathcal I_K$ induces an ordering on the generators $\{e_i\}$ and hence on all monomials in the exterior algebra. The generators of the Taylor complex are the monomials
	\[
	e_I=e_{i_1} e_{i_2}\cdots e_{i_k}
	\]
	where $I=(i_1,\ldots, i_t)$ is an increasing sequence with $i_1< \cdots < i_t$. The induced ordering on these generators is the standard lexicographic ordering: for two index sets $J=\{j_1,\ldots,j_s\}$ and $L=\{\ell_1,\dots,\ell_t\}$, we set that  $e_J < e_L$ if there exists an index $k$ such that $j_i=\ell_i$ for $i<k$ and $j_k < \ell_k$.  
	
	For a subset $J =\{j_1,\ldots, j_s\} \subseteq \{1,\ldots,r\}$, we denote by 
	\[
	m_J=\lcm(m_{j_1},\ldots,m_{j_s})
	\]
	the least common multiple of the corresponding monomials and write $|J|=s$. The \emph{degree} of $e_J$, denoted $deg(e_J)$, is defined to be the degree of the monomial $m_J$ (and not the dimension of a missing face).
	
	\begin{defin} \label{def:taylor} The \emph{Taylor resolution} of $SR[K]$ is the free $\kv$-resolution 
		\[
		\mathcal T=\kv\otimes_\Bbbk\Lambda(e_1,\ldots, e_r)
		\]
		equipped with the differential
		\begin{equation} \label{taylordiff} 
			\delta(e_J) = \sum_{i=1}^{|J|} (-1)^{i-1} \frac{m_J}{m_{J\setminus \{j_i\}}} \otimes  e_{J\setminus \{j_i\}}.
		\end{equation} 
	\end{defin}
	
	Taylor~\cite{t} (see also \cite{jls}) showed that $(\mathcal T, \delta)$ is a free resolution of $SR[K]$.  With respect to the grading conventions set earlier, the Taylor resolution is naturally bigraded, with
	\[
	\deg(x_i)=(0,1) \quad \text{and } \quad \deg(e_J)=(|J|,\deg(e_J)).
	\]
	Since $(\mathcal{T},\delta)$ resolves $SR[K]$, it follows that
	\[
	\Tor^{i,j}_{\kv}(SR[K],\Bbbk) \cong H^{i,j}( \mathcal{T} \otimes_{\kv }\Bbbk). 
	\]

	Moreover, the Taylor resolution admits the structure of a differential graded
	algebra. A multiplicative structure was constructed by Gemeda ~\cite{g}
	(see also~\cite[Proposition~31.3]{p}), and is given on generators by
	\[
	e_W \ast e_Q =
	\begin{cases}
		(-1)^{\sigma(W,Q)},
		\dfrac{m_W m_Q}{m_{W\cup Q}} e_{W\cup Q},
		& \text{if } W\cap Q=\varnothing\\
		0 & \text{otherwise}
	\end{cases}
	\]
	where $\sigma(W,Q)$ denotes the sign of the permutation required to reorder
	the concatenation $WQ$ into increasing order.	
	
	In analogy with the Koszul complex, it is convenient to decompose the Taylor
	complex according to monomial support.
	
	\begin{defin}\label{def:taylorsupport} 
		For a square free monomial $M\in \kv$, we define 		
		\[
		\mathcal{T}\otimes_{\kv}\Bbbk\Big|_M
		\]
		to be the subcomplex of $\mathcal{T}\otimes_{\kv}\Bbbk$ generated by all elements $e_J$ such that $m_J=M$.
	\end{defin} 
	
	By \cite[Theorem 3.2.9]{bp}, the cohomology of the suspension $\Sigma K$ is isomorphic to the cohomology of the restriction of the Taylor complex to elements with support $x_0\cdots x_{m-1}$ 
	\[
	\widetilde{H}^{m-i}(\Sigma K) \cong H^{-i,m}(\mathcal{T} \otimes _{\kv}\Bbbk\Big|_{x_0\cdots x_{m-1}}).
	\]
	
	We now specialise this construction to Vietoris--Rips complexes of the hypercube. In this case, the Stanley--Reisner ideal is generated by the quadratic monomials 
	\[
	\{x_ax_b\ |\ a<b, d_H(a,b)>r\}.
	\]  
	Accordingly, it is convenient to index the generators of the Taylor complex by unordered pairs $(a,b)$ and to write $e_{(a,b)}$ in place of $e_i$.
	
	We fix an ordering of these generators by setting that
	\[
	x_ax_b < x_cx_d
	\]
	if
	\begin{equation}\label{order}
		\begin{aligned}
			i)\quad &  d_H(a,b) < d_H(c,d) \\
			ii)\quad &  \text{the lexicographical ordering if }  d_H(a,b) = d_H(c,d). 
		\end{aligned} 
	\end{equation}	
	This ordering will be used throughout the remainder of the section.
	
	Finally, we recall a subcomplex of the Taylor
	complex introduced by Lyubeznik~\cite{l} (see also~\cite[section 3.2]{jls}) which is weakly equivalent to it and often substantially smaller. The Lyubeznik complex plays a central role in the proof of Theorem~\ref{thm:coconnect}.
	
	We begin by describing the \emph{Lyubeznik resolution} of $SR[\VR]$, whose definition depends explicitly on a fixed ordering of the monomial basis.  We shall use the ordering defined in~\eqref{order}.
	
	We now introduce the notion of admissibility, which is the key combinatorial ingredient in the construction of the Lyubeznik resolution. Informally, admissibility rules out those generators in the Taylor complex that are considered
	redundant by the presence of ``larger" monomial relations.
	
	\begin{defin}\label{rev}
		Let $I=(i_1,\ldots,i_t)$ with $1\le i_1<i_2<\cdots<i_t$.  
		We say that the generator $e_I$ is \emph{admissible} if, for every
		$1\le h\le t$ and every index $q>i_h$, the monomial
		\[
		m_q\notdivides \operatorname{lcm}(m_{i_1},\ldots,m_{i_h}).
		\]
	\end{defin}
	Thus admissibility requires that no generator indexed later in the chosen ordering divides the least common multiple determined by any initial segment of
	$I$. This condition depends crucially on the ordering of the monomial generators.	
	\begin{exam} 
		Consider the Taylor complex associated to $VR(Q_4;1)$. The element
		\[ 
		e_{(0,5)}e_{(1,2)}
		\]
		is not admissible since $x_2x_5\mid x_0x_1x_2x_5$  and  $e_{(2,5)} > e_{(1,2)}$. By contrast, the element
		\[
		e_{(0,3)}e_{(1,2)}
		\]
		is admissible since there is no generator $e_{(a,b)}$ with $d_H(a,b)>1$, $e_{(a,b)} > e_{(1,2)}$ and $x_ax_b\mid x_0x_1x_2x_3$.
	\end{exam}
	
	\begin{rmk}
		In \cite{jls}, admissibility is defined using smaller monomials, i.e.\ the
		condition $q>i_h$ is replaced by $q<i_h$. The variant used here, which tests
		divisibility against larger monomials, is sometimes referred to as
		\emph{reverse admissibility}; see, for example,~\cite{se}.
	\end{rmk}

	\begin{defin} 
		The \emph{Lyubeznik resolution} $\mathcal{L}$  is the subcomplex of the Taylor resolution $\mathcal{T}$ generated by all admissible $e_I$. 
	\end{defin}

	Lyubeznik~\cite{l} (see also~\cite[Section 3.2]{jls}) showed that $\mathcal{L}$ is a resolution of $SR[K]$. Specialising to $\VR$, this implies that 
	\[
	H(\mathcal{L} \otimes_{\kx}\Bbbk,\delta) \cong H(\mathcal{T}\otimes_{\kx} \Bbbk, \delta).
	\]
	We denote $\mathcal{T}\otimes_{\kx} \Bbbk $ by $\mathcal{T}_{n,r}$ and the corresponding \Lu subcomplex by $\mathcal{L}_{n,r}$. To summarize, we say the inclusion
	\[
	\iota\colon \mathcal{L}_{n,r} \lhook\joinrel\longrightarrow \mathcal{T}_{n,r}
	\]
	is a weak equivalence.
	
	Since $\VR$ is a flag complex, admissibility can be checked on pairs of generators. This leads to the following useful simplification.
	
	\begin{lem} \label{lem:inadmissible}
		If $e_I$ is not admissible, then there exist indices $s < w$ such that $e_{i_s} e_{i_w}$ is not admissible.		
	\end{lem}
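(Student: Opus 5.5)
The proof is a direct unwinding of Definition~\ref{rev}. The only property of $\VR$ it uses is the one noted just before the lemma: since $\VR$ is flag, every generator $m_q$ of $\mathcal I_{\VR}$ is a square-free quadratic monomial $x_ax_b$ with $a\neq b$. The plan is to take a witness of non-admissibility for $e_I$ and show that it already witnesses non-admissibility for a suitable pair $e_{i_s}e_{i_w}$.

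\textbf{Step 1: a witness for $e_I$.} Write $I=(i_1,\ldots,i_t)$ with $i_1<\cdots<i_t$. Since $e_I$ is not admissible, there are an index $1\le h\le t$ and a generator index $q>i_h$ such that $m_q\mid \lcm(m_{i_1},\ldots,m_{i_h})$. Write $m_q=x_ax_b$ with $a\ne b$. The lcm of square-free monomials is the product of the variables occurring in any of them. Hence $x_a$ divides some $m_{i_s}$ and $x_b$ divides some $m_{i_w}$, with $s,w\le h$.

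\textbf{Step 2: the two indices differ.} Suppose $s=w$. Then $x_ax_b\mid m_{i_s}$, and both are square-free of degree two, so $m_q=m_{i_s}$. Distinct indices label distinct generators, so $q=i_s\le i_h$. This contradicts $q>i_h$. Hence $s\ne w$, and after relabelling $a$ and $b$ we may assume $s<w$.

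\textbf{Step 3: the pair is not admissible.} Consider the pair $e_{i_s}e_{i_w}$ and take its full initial segment. We have $m_q\mid \lcm(m_{i_s},m_{i_w})$, and $q>i_h\ge i_w$ because $w\le h$. So $q$ exceeds the last index of this initial segment while $m_q$ divides the corresponding lcm. By Definition~\ref{rev}, $e_{i_s}e_{i_w}$ is not admissible.

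The only point needing care is Step 2. It is where squarefreeness and quadraticity of the generators (the flag property) are essential, since they rule out the case that both variables of $m_q$ come from a single earlier generator. The remaining inequalities on indices are routine.
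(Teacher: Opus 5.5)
Your proof is correct and follows the same route as the paper: take a witness $m_q=x_ax_b$ of non-admissibility, find the two generators of the initial segment that contribute $x_a$ and $x_b$, and observe that this pair is already inadmissible. Your version is slightly more careful than the paper's, which tacitly places the witness at the full segment $h=t$ and never rules out $s=w$; your Steps 1--2 handle both points explicitly.
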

	
	\begin{proof}  
		Since $e_I$ is not admissible, there exists a generator $e_{(u,v)}$ such that $x_ux_v > x_{a_{i_t}}x_{b_{i_t}}$ and $x_ux_v\divides\lcm(e_I)$. Consequently, there are indices $\alpha$ and $\beta$ such that
		$i_\alpha=(a_\alpha,b_\alpha)$ with $u\in\{a_\alpha,b_\alpha\}$ and
		$i_\beta=(a_\beta,b_\beta)$ with $v\in\{a_\beta,b_\beta\}$. Let
		$w=\max\{\alpha,\beta\}$ and $s=\min\{\alpha,\beta\}$. Then
		\[
		x_u x_v > x_{a_w}x_{b_w}
		\quad\text{and}\quad
		x_u x_v \mid \operatorname{lcm}(e_{i_s}e_{i_w}),
		\]
		showing that $e_{i_s}e_{i_w}$ is not admissible.
	\end{proof} 
	
	We will frequently use products of Taylor generators corresponding to pairs of vertices at maximal Hamming distance.
	\begin{defin}\label{def:Thetan}
		Define
		\[
		\Theta_n
		=
		\prod_{\substack{0\le i<j<2^n\\ i+j=2^n-1}} e_{(i,j)}.
		\]
	\end{defin}		
	Each factor $e_{(i,j)}$ corresponds to a pair of vertices of the hypercube
	satisfying $d_H(i,j)=n$. Writing $\overline{i}$ for the bitwise complement of
	$i$, or equivalently the antipodal vertex to $i$, we  express
	\[
	\Theta_n
	=
	\prod_{i=0}^{2^{n-1}-1} e_{(i,\overline{i})}.
	\]
	The support of $\Theta_n$ is $x_0\cdots x_{2^n-1}$, and its bidegree is
	$(-2^{n-1},\,2^n)$.
	
	\begin{exam}
		For $VR(Q_3;1)$,
		\[
		\Theta_3
		=
		e_{(0,7)}e_{(1,6)}e_{(2,5)}e_{(3,4)}.
		\]
	\end{exam}

	\begin{prop}\label{prop:thetaadmis}
		For any $r$, the class $\Theta_{n}$ is admissible in the Lybeznik resolution $\mathcal{L}_{n,r}$.
	\end{prop}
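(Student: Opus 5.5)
We have to show that no initial segment of the ordered index sequence of $\Theta_n$ has a least common multiple divisible by a generator that comes later in the ordering \eqref{order}.

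By Lemma~\ref{lem:inadmissible} (which uses that $\VR$ is flag), it suffices to check pairs. That is, we show that for any two factors $e_{(i,\overline i)}$ and $e_{(j,\overline j)}$ of $\Theta_n$, the product $e_{(i,\overline i)}e_{(j,\overline j)}$ is admissible. Admissibility of the single factor $e_{(i,\overline i)}$ is immediate, since the only generators dividing $x_ix_{\overline i}$ are $x_ix_{\overline i}$ itself.

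Take a pair with $e_{(i,\overline i)}<e_{(j,\overline j)}$ in the ordering \eqref{order}. A generator $x_ux_v$ dividing $\lcm = x_ix_{\overline i}x_jx_{\overline j}$, with $u\ne v$, is one of six pairs from $\{i,\overline i,j,\overline j\}$:
\begin{itemize}
\item the two antipodal pairs $(i,\overline i)$ and $(j,\overline j)$;
\item the four mixed pairs $(i,j)$, $(i,\overline j)$, $(\overline i,j)$, $(\overline i,\overline j)$, present only when their Hamming distance exceeds $r$.
\end{itemize}

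The mixed pairs come first. Each of them has Hamming distance strictly less than $n$, because $d_H(i,j)=n$ would force $j=\overline i$. The ordering \eqref{order} compares Hamming distance first, so every mixed generator is strictly smaller than both $e_{(i,\overline i)}$ and $e_{(j,\overline j)}$ and can never be a later generator.

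The antipodal pairs are handled next. The only candidates at distance $n$ are $x_ix_{\overline i}$ and $x_jx_{\overline j}$. We check the initial segments separately.
\begin{itemize}
\item For the segment $\{e_{(i,\overline i)}\}$, the monomial $x_jx_{\overline j}$ does not divide $x_ix_{\overline i}$.
\item For the full segment, the later index is $e_{(j,\overline j)}$ itself, and nothing at distance $n$ dividing the lcm exceeds it.
\end{itemize}
Hence no generator strictly later than the last index of an initial segment divides that segment's lcm. So every pair is admissible, and by Lemma~\ref{lem:inadmissible} so is $\Theta_n$.

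This works for every $r$, since distance-$n$ generators are always present when $r<n$; the statement presumably assumes this implicitly, as otherwise $\Theta_n$ is not defined in $\mathcal{L}_{n,r}$.

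The one delicate point is the reading of Lemma~\ref{lem:inadmissible} and Definition~\ref{rev}. The test must compare against generators larger than the last index of the segment under consideration, not merely larger than some index in the segment. The argument above handles both segments explicitly, which covers either reading.
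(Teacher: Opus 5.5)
Your proposal is correct and follows essentially the same route as the paper: reduce to pairs of factors via Lemma~\ref{lem:inadmissible}, then observe that any generator dividing $x_ix_{\overline i}x_jx_{\overline j}$ and coming later in the ordering~\eqref{order} must have Hamming distance $n$, and the only such generators are the two antipodal pairs themselves. Your explicit enumeration of the four mixed pairs and your separate check of both initial segments simply spell out what the paper's uniqueness-of-antipodal-pairs argument leaves implicit.
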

	\begin{proof}
		Suppose, for contradiction, that $\Theta_{n}$ is not admissible. By Lemma~\ref{lem:inadmissible}, there are two factors of $\Theta_{n,r}$, say $e_{(i_1,j_1)}$ and $e_{(i_2,j_2)}$, with $ e_{(i_1,j_1)} <  e_{(i_2,j_2)}$ and  $e_{(i_1,j_1)} e_{(i_2,j_2)} $ is not admissible.
		For this product to be inadmissible, there must be a generator $e_{(a,b)}$ of the Taylor complex satisfying $e_{(a,b) }> e_{(i_2,j_2)}$ and $x_ax_b\mid\lcm(x_{i_1}x_{j_1}, x_{i_2}x_{j_2})$. Since $d_H(i_2,j_2)=n$, it follows that $d_H(a,b)=n$ as well.   However, among integers less than $2^n$, pairs $(a,b)$ with $a+b=2^{n}-1$ are unique. Hence, we must have $e_{(a,b)} = e_{(i_2,j_2)}$ or $e_{(i_1,j_1)}$ which contradicts the strict inequality $e_{(a,b)} > e_{(i_2,j_2)}$.
	\end{proof}

		

		

	For the remainder of this section, we fix $n$ and $r$.  For $s\ge 0$, we denote by
	$\mathcal{L}^s$ the summand of $\mathcal{L}_{n,r}$ consisting of elements of
	filtration $s$, that is, products of exactly $s$ generators
	$e_i$.
	
	\begin{lem}\label{lem:decompostion}
		Let $e_I\in\mathcal{L}^s$ be an admissible element whose support is $\operatorname{supp}(e_I)=x_0\cdots x_{2^n-1}$.
		Then $e_I$ decomposes as
		\[
		e_I = e_J\Theta_n
		\]
		where $e_J\in\mathcal{L}^{s-2^{n-1}}$ is admissible.
	\end{lem}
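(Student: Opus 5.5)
The plan is to show that an admissible element with full support must contain every antipodal factor $e_{(i,\overline{i})}$ among its factors. Once that is known, we peel off $\Theta_n$ and check that what remains is still admissible.

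\textbf{Step 1: every antipodal factor occurs.} Throughout we assume, as in Theorem~\ref{thm:coconnect}, that $n>r$. Then $d_H(a,\overline a)=n>r$, so each $x_ax_{\overline a}$ is a generator of $\mathcal I_{\VR}$. Fix a vertex $a$. Since $\operatorname{supp}(e_I)=x_0\cdots x_{2^n-1}$, the vertex $a$ lies in some factor $e_{(a,b)}$ of $e_I$, and $\overline a$ lies in some factor $e_{(\overline a,c)}$.

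If these two factors coincide, then that factor is $e_{(a,\overline a)}$ and we are done. Otherwise $b\neq \overline a$ and $c\neq a$. Since $\overline a$ is the unique vertex at Hamming distance $n$ from $a$, both $d_H(a,b)$ and $d_H(\overline a,c)$ are strictly less than $n$. By rule i) of the ordering~\eqref{order}, the generator $e_{(a,\overline a)}$ is then strictly larger than both factors. It also satisfies
\[
x_ax_{\overline a}\mid \operatorname{lcm}(x_ax_b,\,x_{\overline a}x_c).
\]
Let $i_h$ be the later of the two factors in $I$. Then $x_ax_{\overline a}$ divides the lcm of the initial segment ending at $i_h$, and its index exceeds $i_h$. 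This contradicts the admissibility of $e_I$ in Definition~\ref{rev}. Hence every $e_{(a,\overline a)}$, for $0\le a<2^{n-1}$, is a factor of $e_I$.

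\textbf{Step 2: splitting off $\Theta_n$.} The antipodal factors are $2^{n-1}$ distinct generators. We write $I=J\sqcup\{\text{antipodal pairs}\}$ and, in the exterior algebra $\Lambda(e_i)$, reorder to get
\[
e_I=\pm\, e_J\Theta_n, \qquad e_J\in\mathcal L^{s-2^{n-1}}.
\]
Here the product is the exterior product of symbols, and the sign can be absorbed by the reordering convention. We will not read it via Gemeda's product, which would introduce monomial coefficients that vanish after tensoring with $\Bbbk$.

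\textbf{Step 3: $e_J$ is admissible.} Admissibility passes to sub-products. Suppose $e_J$ were not admissible. By Lemma~\ref{lem:inadmissible} some pair $e_{j_s}e_{j_w}$ with $j_s<j_w$ would be inadmissible, witnessed by some $m_q$ with $q>j_w$ dividing $\operatorname{lcm}(m_{j_s},m_{j_w})$. The pair $j_s,j_w$ also lies in $I$. The initial segment of $I$ ending at $j_w$ contains both $j_s$ and $j_w$, so its lcm is divisible by $m_q$, again with $q>j_w$. Thus $e_I$ would be inadmissible, a contradiction.

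The only real content is Step 1. The point needing care there is the uniqueness of the antipode, which makes $e_{(a,\overline a)}$ strictly larger in the ordering than any factor that contains only one of $a,\overline a$.
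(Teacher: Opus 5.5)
Your proof is correct and follows the paper's argument. Step~1 is exactly the paper's proof; yours is slightly sharper, because it names the initial segment of $I$ that witnesses inadmissibility. Steps~2 and~3 spell out what the paper compresses into ``the claimed decomposition follows.'' One small refinement: no sign arises in Step~2. The antipodal generators have maximal Hamming distance $n$, so under the ordering~\eqref{order} they are the last factors of $e_I$, and hence $e_I=e_J\Theta_n$ exactly.
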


	\begin{proof} Fix $0\leq a < 2^{n-1}$ and suppose that $e_{(a,\overline{a})}$ is not a factor of $e_I$.  Since $\lcm(e_I)= x_0 \cdots x_{2^n-1}$ both $x_a$ and $x_{\overline{a}}$ must appear among the factors of $e_I$ with Hamming distance less than $n$. In other words, there must be a subproduct $e_L$ of $e_I$ with  $L=\{(a_{\ell_1},b_{\ell_1}),\ldots, (a_{\ell_s},b_{\ell_s})\}$ for $\ell_1<\cdots< \ell_s$ such that $d_H(a_{\ell_i},b_{\ell_i})< n$ and $x_ax_{\overline{a}}\mid\lcm(e_L)$. Since generators corresponding to pairs with Hamming distance $n$ are larger than generators corresponding to smaller Hamming distance, the relation $x_ax_{\overline{a}}$ is greater than $x_{a_{\ell_s}}x_{b_{\ell_s}}$. This contradicts the admissibility of $e_I$. Hence, every factor $e_{(a,\overline a)}$ must occur in $e_I$, and the claimed decomposition follows.
	\end{proof}
	The decomposition in Lemma~\ref{lem:decompostion} has as a consequence that the differentials of such elements in the Taylor resolution admit a particularly simple description.
	
	\begin{lem}\label{lem:diff}
		Let $e_J\Theta_n $ be an admissible element of $\mathcal{L}_{n,r}$. Then its
		differential is 
		\[
		\delta(e_J\Theta_n) = \sum_{j_k \in J} (-1)^{k+1} e_{J\setminus \{j_k\}}\Theta_n.
		\]
	\end{lem}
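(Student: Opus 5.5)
The plan is to compute $\delta(e_J\Theta_n)$ in $\mathcal{T}_{n,r}=\mathcal{T}\otimes_{\kx}\Bbbk$ directly from the Taylor formula~\eqref{taylordiff}. After tensoring with $\Bbbk$, every $x_i$ acts as zero. So a summand $\pm\frac{m_I}{m_{I\setminus\{i\}}}e_{I\setminus\{i\}}$ survives exactly when $m_{I\setminus\{i\}}=m_I$, that is, when deleting the factor $e_i$ does not change the least common multiple. Otherwise it vanishes. Here $I$ is the index set of $e_J\Theta_n$, and I split its factors into two groups: those of $J$ and those of $\Theta_n$.

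\emph{Ordering and sign.} First, no factor of $e_J$ is a pair at Hamming distance $n$. Such a pair would be $(a,\overline a)$ for some $a$, and $e_{(a,\overline a)}$ already occurs in $\Theta_n$, so the exterior product would vanish. Hence every factor of $e_J$ has distance $<n$. By the ordering~\eqref{order}, all factors of $e_J$ therefore precede all factors of $\Theta_n$. So in the increasing order of $I$, the $k$-th factor $e_{j_k}$ of $J$ sits in position $k$, and the Taylor sign is $(-1)^{k-1}=(-1)^{k+1}$.

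\emph{Deleting a factor of $J$.} Removing $e_{j_k}$ leaves $\Theta_n$ intact. Since $\Theta_n$ has support $x_0\cdots x_{2^n-1}$, we get $m_{I\setminus\{j_k\}}=x_0\cdots x_{2^n-1}=m_I$. The coefficient is therefore $1$, giving the term $(-1)^{k+1}e_{J\setminus\{j_k\}}\Theta_n$.

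\emph{Deleting a factor of $\Theta_n$.} This is the step needing an argument, and it reuses the idea of Lemma~\ref{lem:decompostion}. Suppose deleting $e_{(a,\overline a)}$ left the lcm unchanged. Then both $x_a$ and $x_{\overline a}$ divide the lcm of the remaining factors. The other factors of $\Theta_n$ are pairs disjoint from $\{a,\overline a\}$, so $x_a$ and $x_{\overline a}$ must both be covered by factors of $e_J$. Let $e_{j_h}$ be the last factor of $J$ needed to cover them. Then $x_ax_{\overline a}$ divides $\lcm(m_{j_1},\ldots,m_{j_h})$. Moreover $x_ax_{\overline a}$ has distance $n$ and so is larger than $m_{j_h}$, which has distance $<n$. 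This contradicts the admissibility of $e_J\Theta_n$, since its initial segment $j_1<\cdots<j_h$ is an initial segment of $I$. Consequently the coefficient $m_I/m_{I\setminus\{(a,\overline a)\}}$ is a monomial of positive degree (namely $x_a$, $x_{\overline a}$ or $x_ax_{\overline a}$), and this term is zero in $\mathcal{T}_{n,r}$.

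Summing the surviving terms gives exactly the stated formula. Finally, every $e_{J\setminus\{j_k\}}\Theta_n$ is again admissible, because admissibility of $e_J\Theta_n$ passes to subproducts via Lemma~\ref{lem:inadmissible}. So the formula holds within $\mathcal{L}_{n,r}$, consistent with $\mathcal{L}_{n,r}$ being a subcomplex. The only real obstacle is the admissibility argument for the $\Theta_n$ factors; the rest is bookkeeping.
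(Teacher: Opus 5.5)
Your proposal is correct and follows the paper's proof. Deleting a factor of $e_J$ keeps the full support $x_0\cdots x_{2^n-1}$ supplied by $\Theta_n$, while deleting some $e_{(a,\overline a)}$ without changing the lcm would force $x_ax_{\overline a}\mid\lcm(e_J)$ and so contradict admissibility. You also make explicit what the paper leaves implicit: why the factors of $J$ precede those of $\Theta_n$, which justifies the sign $(-1)^{k+1}$, and which initial segment witnesses the failure of admissibility. You state the survival criterion correctly as equality of the lcms, whereas the paper's text writes ``$\neq$'' there.
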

	\begin{proof}
		The first terms of the Taylor differential~\eqref{taylordiff} are obtained by omitting a factor $e_{j_k}$. The corresponding coefficient $\epsilon(j_k)$  is nonzero when $\lcm(e_{J\setminus \{j_k\}} \Theta_n)=x_0\cdots x_{2^n-1}$, which always holds since $\lcm(\Theta_n)=x_0\cdots x_{2^n-1}$.
		
		All remaining terms in the Taylor differential arise by omitting one of the factors $e_{(a,\overline a)}$ of $\Theta_n$. Such terms have a vanishing coefficient unless
		\[
		\lcm\left(e_J\frac{\Theta_n}{e_{(a,\overline a)}}\right)
		\neq x_0\cdots x_{2^n-1}.
		\]
		However, this would require both $x_a$ and $x_{\overline a}$ to appear in $\operatorname{lcm}(e_J)$, contradicting the admissibility of $e_J\Theta_n$. Consequently, all such terms vanish and the stated formula
		follows.
	\end{proof}

	We now introduce a combinatorial dual object that allows us to convert algebraic information about admissible classes in the Lyubeznik resolution into topological information about the cohomology of $\VR$. The guiding principle is that minimal missing faces in $\VR$ correspond to maximal simplices in an appropriate dual complex, and vice versa. This duality enables us to control high‑degree cohomology of $\VR$ by studying the homology of a simpler simplicial complex.
	
	In particular, by combining this correspondence with the decomposition results of the previous subsection, we will obtain effective upper bounds on the coconnectivity of $\VR$.

	We now define a simplicial complex that is $2^{n-1}-1$ dual to $\VR$ which itself is a flag complex. It is worth nothing that this is not the Alexander dual of $\VR$.
	
	\begin{defin} Define the ordered simplicial complex $(\mathcal{C}_{n,r},\widehat{d})$ as
		follows. The \emph{vertices} of $\mathcal{C}_{n,r}$ are the generators $e_{(a,b)}$ such that $r < d_H(a,b) < n.$
		
		An ordered collection of vertices
		\[
		e_{(a_1,b_1)},\ldots,e_{(a_t,b_t)}
		\]
		spans a $(t-1)$-simplex of $\mathcal{C}_{n,r}$ if and only if the product $e_{(a_1,b_1)}\cdots e_{(a_t,b_t)}$ is admissible in the Lyubeznik resolution $\mathcal{L}_{n,r}$.

		The differential $\widehat{d}$ is induced by the Leibniz rule 
		\[
		\widehat{d}(e_Je_I ) =
		\widehat{d}(e_J) e_I +(-1)^{|J|} e_J\widehat{d}(e_I)
		\]
		whenever $e_Je_I\Theta_n$ is admissible.
		The \emph{simplicial chain complex}  of $\mathcal{C}$ is denoted 
		$\widehat{\mathcal{C}}_{n,r}$  with $n,r$ omitted if $n,r$ are fixed.		
		The associated simplicial chain complex of $\mathcal{C}_{n,r}$ is denoted $\widehat{\mathcal{C}}_{n,r}$.
		
		When $n$ and $r$ are fixed, we write $\mathcal{C}$ for $\mathcal{C}_{n,r}$ and $\widehat{\mathcal{C}}$ for its simplicial chain complex. 
	\end{defin}
	
	\begin{lem}
		The simplicial complex $\mathcal{C}_{n,r}$ is flag.
	\end{lem}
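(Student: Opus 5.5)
To prove that $\mathcal{C}_{n,r}$ is flag, I would show directly that a set of vertices is a simplex whenever every pair in it is an edge. The point is that the simplices of $\mathcal{C}_{n,r}$ are defined through admissibility in $\mathcal{L}_{n,r}$, and Lemma~\ref{lem:inadmissible} already says that admissibility is detected on pairs.

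Before that, I would check that $\mathcal{C}_{n,r}$ really is a simplicial complex, i.e.\ closed under taking faces. Let $e_I=e_{i_1}\cdots e_{i_t}$ be admissible, listed in increasing order, and let $I'\subset I$. Take an initial segment of $I'$ ending at $i_h$ and any index $q>i_h$. The lcm of this segment divides the lcm of the initial segment of $I$ ending at the same $i_h$. If $m_q$ divided the smaller lcm, it would divide the larger one, contradicting admissibility of $e_I$. So $e_{I'}$ is admissible.

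For flagness, let $\{e_{(a_1,b_1)},\ldots,e_{(a_t,b_t)}\}$ be a set of vertices of $\mathcal{C}_{n,r}$, ordered increasingly by~\eqref{order}. Assume every pair $e_{(a_s,b_s)}e_{(a_w,b_w)}$ with $s<w$ spans an edge, i.e.\ is admissible in $\mathcal{L}_{n,r}$. If the full product $e_I$ were not admissible, Lemma~\ref{lem:inadmissible} would give indices $s<w$ with $e_{i_s}e_{i_w}$ not admissible. That contradicts the hypothesis, so $e_I$ is admissible and spans a $(t-1)$-simplex.

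The main point needing care is the proof of Lemma~\ref{lem:inadmissible} itself, since it compares $x_ux_v$ with the last factor $i_t$ rather than with the initial segment that actually fails. I would redo it explicitly for an arbitrary initial segment $i_1,\ldots,i_h$ and an index $q=(u,v)>i_h$ with $x_ux_v\mid\lcm(m_{i_1},\ldots,m_{i_h})$. Since every generator is quadratic and squarefree, $u$ lies in some $m_{i_\alpha}$ and $v$ lies in some $m_{i_\beta}$ with $\alpha,\beta\le h$. Put $w=\max(\alpha,\beta)$ and $s=\min(\alpha,\beta)$. Then $q>i_h\ge i_w$ and $x_ux_v\mid\lcm(m_{i_s},m_{i_w})$, so the pair $e_{i_s}e_{i_w}$ fails admissibility at its second entry. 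If $\alpha=\beta$, then $x_ux_v=m_{i_\alpha}$, which forces $q=i_\alpha\le i_h$, a contradiction; so $s<w$ genuinely holds.

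Finally, the vertices of $\mathcal{C}_{n,r}$ are just a subset of the generators, those with $r<d_H<n$. Restricting to this subset does not affect the argument, because admissibility is always tested against all generators of $\mathcal{L}_{n,r}$, including those at Hamming distance $n$.
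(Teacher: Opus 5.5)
Your proof is correct and follows the same route as the paper. The paper's one-line argument says only that the minimal non-faces of $\mathcal{C}_{n,r}$ are the inadmissible pairs, i.e.\ it appeals to Lemma~\ref{lem:inadmissible}. Your extra checks correctly fill in details the paper leaves implicit or handles loosely: closure under faces, re-proving Lemma~\ref{lem:inadmissible} for an arbitrary failing initial segment $i_1,\ldots,i_h$ (the paper's proof only treats failure against the last factor $i_t$), and ruling out $\alpha=\beta$.
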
	
	\begin{proof}	
		Minimal non‑faces are
		exactly the pairs
		\[
		\{e_{(a_1,b_1)}, e_{(a_2,b_2)}\}
		\]
		for which the product
		$e_{(a_1,b_1)}e_{(a_2,b_2)}$ is not admissible in
		$\mathcal{L}_{n,r}$.
	\end{proof}
	
	\begin{cor}\label{cor:dual}   
		There is a morphism of differential modules
		\[
		\iota\colon \mathcal{L}_{n,r} \longrightarrow\widehat{\mathcal{C}}
		\]					
		which induces an isomorphism 
		\[
		H^{2^{n-1}-t-1}(\VR)\cong H_{t-1}(\widehat{\mathcal{C}})
		\]
		for $t>1$.	
	\end{cor}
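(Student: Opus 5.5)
The map will be $\iota(e_J\Theta_n)=e_J$ on elements of full support, and zero on all other basis elements. The isomorphism then comes from Hochster's decomposition in its Taylor form, with degrees tracked carefully.

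\emph{Step 1: reduce to full support.} By \cite[Theorem 3.2.9]{bp}, $\widetilde H^{m-i}(\Sigma K)\cong H^{-i,m}(\mathcal T\otimes\Bbbk|_{x_0\cdots x_{m-1}})$ with $m=2^n$. The inclusion $\mathcal L_{n,r}\hookrightarrow\mathcal T_{n,r}$ is a weak equivalence. The Taylor differential only removes generators and preserves the support monomial $m_J$ (terms with a drop in lcm die after $\otimes\Bbbk$). Hence both complexes split by support, and the equivalence restricts to the full-support summand $\mathcal L_{n,r}|_{x_0\cdots x_{2^n-1}}$. So $\widetilde H^{*}(\VR)$ is computed by this summand.

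\emph{Step 2: identify the summand with the simplicial chains of $\mathcal C$.} By Lemma~\ref{lem:decompostion}, every admissible full-support element is uniquely $e_J\Theta_n$. Here $e_J$ is an admissible product of generators $e_{(a,b)}$ with $r<d_H(a,b)<n$; since $\Theta_n$ already contains all antipodal pairs, $e_J$ has no factor at distance $n$. Thus $e_J$ is an ordered simplex of $\mathcal C_{n,r}$, possibly the empty one.

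Conversely, take a simplex $e_J$ of $\mathcal C$. I must check that $e_J\Theta_n$ is admissible and has full support. Full support is automatic from $\Theta_n$. For admissibility I use Lemma~\ref{lem:inadmissible}, which lets me test pairs:
\begin{itemize}
\item pairs inside $J$ are admissible by definition of $\mathcal C$;
\item pairs inside $\Theta_n$ are admissible by Proposition~\ref{prop:thetaadmis};
\item mixed pairs $e_{(a,b)}e_{(c,\overline c)}$ need the most care.
\end{itemize}
In a mixed pair the larger factor is $e_{(c,\overline c)}$, since it has Hamming distance $n$. A witness of inadmissibility would be some $e_{(u,v)}>e_{(c,\overline c)}$ dividing $x_ax_bx_cx_{\overline c}$. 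It must have $d_H(u,v)=n$, so $v=\overline u$ with $u\in\{a,b\}$ and $\overline u\in\{c,\overline c\}$.

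I expect this to be the main obstacle. It holds only if the definition of $\mathcal C$, which the ordered Leibniz differential implicitly enforces through the phrase ``whenever $e_Je_I\Theta_n$ is admissible'', forbids such configurations. I will try first to read the definition of simplices of $\mathcal C$ as requiring admissibility of $e_J\Theta_n$, consistent with the differential's definition. If that reading fails, I will show that the offending terms vanish in the relevant degrees.

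With the bijection established, Lemma~\ref{lem:diff} shows $\delta(e_J\Theta_n)=\sum(-1)^{k+1}e_{J\setminus j_k}\Theta_n$. This is exactly the simplicial boundary $\widehat d(e_J)$ given by the Leibniz rule. So $\iota$ is a chain map, and an isomorphism from the full-support summand onto $\widehat{\mathcal C}$, up to augmentation.

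\emph{Step 3: degree bookkeeping.} An element $e_J\Theta_n$ with $|J|=t$ has homological degree $-(t+2^{n-1})$ and internal degree $2^n$. It therefore represents $\widetilde H^{2^n-t-2^{n-1}}(\Sigma\VR)=\widetilde H^{2^{n-1}-t-1}(\VR)$. On the other side, $e_J$ is a $(t-1)$-simplex of $\mathcal C$, giving $H^{2^{n-1}-t-1}(\VR)\cong H_{t-1}(\widehat{\mathcal C})$. For $t>1$, the empty simplex and the vertices, which affect only reduced versus unreduced homology in low degrees, play no role. Finally, $\iota$ is extended by zero on the non-full-support summands, which are direct summands by Step 1, so $\iota$ is a morphism of differential modules.
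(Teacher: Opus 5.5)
Your argument follows the paper's proof: the same map $e_J\Theta_n\mapsto e_J$, Lemma~\ref{lem:decompostion} for the decomposition, Lemma~\ref{lem:diff} to identify the Taylor differential with the simplicial boundary, and the same degree count. It is actually more careful than the paper's, which asserts that $\iota$ is an isomorphism in positive filtration without checking that every simplex of $\mathcal{C}$ is hit.

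However, you left open the one step you called the main obstacle, and your assessment that it ``holds only if the definition of $\mathcal{C}$ forbids such configurations'' is wrong. The step holds unconditionally. You need neither a reinterpretation of $\mathcal{C}$ nor the vague fallback of showing that offending terms vanish.

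The configuration you describe cannot occur. Suppose $e_{(u,\overline u)}>e_{(c,\overline c)}$ with $u\in\{a,b\}$ and $\overline u\in\{c,\overline c\}$. Complementation is an involution, so $u=\overline{\overline u}\in\{c,\overline c\}$ as well. Hence $\{u,\overline u\}=\{c,\overline c\}$, that is, $e_{(u,\overline u)}=e_{(c,\overline c)}$, which contradicts the strict inequality. The remaining case $\{u,\overline u\}\subseteq\{a,b\}$ is excluded because $d_H(a,b)<n$. So every mixed pair $e_{(a,b)}e_{(c,\overline c)}$ is admissible. Combining this with Proposition~\ref{prop:thetaadmis}, your pairwise check, and Lemma~\ref{lem:inadmissible}, $e_J\Theta_n$ is admissible for every simplex $e_J$ of $\mathcal{C}$.

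Conversely, admissibility of $e_J\Theta_n$ implies admissibility of $e_J$, because every factor of $e_J$ precedes every factor of $\Theta_n$. So the initial segments of $e_J$ are initial segments of $e_J\Theta_n$. The two readings of the definition of $\mathcal{C}$ therefore give the same complex. With this line inserted, your bijection is established and the rest of your proof goes through as written.
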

	
	\begin{proof}
		The morphism $\iota$ is defined on generators by
		\[
		e_J \Theta_n \longmapsto \begin{cases}
			e_J, & J\neq\varnothing,\\
			0, & J=\varnothing.
		\end{cases}
		\]
		It is an isomorphism on all components of positive filtration; a class of filtration $|J|+2^{n-1}$ in $\mathcal{L}_{n,r}$ is sent to a $(|J|-1)$-chain in $\widehat{\mathcal{C}}$.
		
		In particular, a cohomology  class in $ H^{2^{n-1}-t}(\Sigma\VR)=H^{2^n-(t+2^{n-1})}(\Sigma\VR)$ is represented by a cocycle of filtration $t+2^{n-1}$ in $\mathcal{L}_{n,r}$  which maps to a $(t-1)$-cocycle in $\widehat{\mathcal{C}}$. 
	\end{proof}

	To prove Theorem~\ref{thm:coconnect}, we introduce a further
	simplicial complex, denoted by $\mathcal{J}$,  which may be of independent interest. While $\mathcal{C}$ captures the full admissibility conditions arising from the Lyubeznik resolution, $\mathcal{J}$ retains only those constraints that are essential for controlling top degree cohomology. This relaxation will allow us to relate the cohomology of $\VR$ to the homology of a simpler combinatorial object, while still preserving sufficient structure to obtain sharp coconnectivity bounds.
	
	The construction of $\mathcal{J}$ parallels that of $\mathcal{C}$. We begin by
	introducing a weaker notion of admissibility.
	
	\begin{defin}
		Let
		$e_J, J=\{(a_1,b_1),\ldots,(a_t,b_t)\}$ be a monomial in the generators of the Taylor complex. We say that $e_J$ satisfies the \emph{modified admissibility condition} if
		\[
		x_a x_{\overline{a}} \notdivides x_{a_1}x_{b_1}\cdots x_{a_t}x_{b_t}
		\]
		for all $0\le a<2^{n-1}$.
	\end{defin} 
	The difference between $\mathcal{J}$ and $\mathcal{C}$ is that, in the former, we only insist on non-divisibility by relations which are factors of $\Theta_n$ rather than all higher relations. In this sense, $\mathcal{J}$ can be viewed as a coarse dual to $\VR$ that isolates the behavior of cohomology in the highest degrees.	
	\begin{defin}
		
		The ordered simplicial complex $(\mathcal{J}_{n,r},\widehat{d})$ is defined as
		follows. Its \emph{vertices} are the generators $e_{(a,b)}$ with $r < d_H(a,b) < n$.
		
		An ordered set of vertices
		\[
		e_J = \{e_{(a_1,b_1)},\ldots,e_{(a_t,b_t)}\}
		\]
		spans a $(t-1)$‑simplex of $\mathcal{J}_{n,r}$ if and only if $e_J$ satisfies the modified admissibility condition.
		
		The differential $\widehat{d}$ is induced by
		\[
		\widehat{d}(e_Je_I)
		=
		\widehat{d}(e_J)e_I
		+
		(-1)^{|J|} e_J\widehat{d}(e_I),
		\]
		whenever the product $e_Je_I$ satisfies the modified admissibility condition. When $n$ and $r$ are fixed, we write $\mathcal{J}$ for $\mathcal{J}_{n,r}$. The associated simplicial chain complex is denoted by $\widehat{\mathcal{J}}$.
	\end{defin}
	
	\begin{exam}\label{exm:31} The vertices of $\mathcal{J}_{3,1}$  are 
		\[
		\begin{array}{cccccc}
			e_{(0,3)} & e_{(0,5)} & e_{(3,5)} & e_{(0,6)} & e_{(3,6)} & e_{(5,6)} \\
			| & | & | & | & | & | \\
			e_{(1,2)} & e_{(1,4)} & e_{(1,7)} & e_{(2,4)} & e_{(2,7)} & e_{(4,7)}
		\end{array}
		\]
		
		The top and bottom rows each span a 5-simplex. There is an edge between every
		pair of vertices within each row, and certain vertical edges connect vertices in
		the top row to vertices in the bottom row. No other edges occur. For instance,
		there is no edge between $e_{(0,3)}$ and $e_{(1,4)}$ since $x_4=x_{\bar 3}$. As a result, $\mathcal{J}_{3,1}$ has the homotopy type of a wedge of five circles.  
	\end{exam}
	
	Note that Lemmas \ref{lem:decompostion} and \ref{lem:diff} continue to hold when the Lyubeznik admissibility condition is replaced by the modified admissibility
	condition.
	
	\begin{prop} \label{prop:summand} 
		For $t>1$, the group $H^{2^{n-1}-t-1}(\VR)$ is a direct summand of $H_{t-1}(\widehat{\mathcal{J}})$.
	\end{prop}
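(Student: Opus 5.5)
The plan is to run the argument of Corollary~\ref{cor:dual} with the Lyubeznik complex replaced by a larger subcomplex of the Taylor complex, built from the modified admissibility condition. Let $\mathcal{M}_{n,r}\subset\mathcal{T}_{n,r}$ be spanned by the elements $e_J\Theta_n$ with $J$ a set of pairs $(a,b)$, $r<d_H(a,b)<n$, such that $e_J$ satisfies the modified admissibility condition. Every such element has full support $x_0\cdots x_{2^n-1}$.

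\emph{Step one} is to check that $\mathcal{M}_{n,r}$ is a subcomplex of the full-support part $\mathcal{T}_{n,r}\big|_{x_0\cdots x_{2^n-1}}$. This is the remark that Lemma~\ref{lem:diff} still holds under modified admissibility. Removing a factor $e_{(a,\overline a)}$ of $\Theta_n$ lowers the lcm, because neither $x_a x_{\overline a}$ nor both of $x_a,x_{\overline a}$ occur in $\lcm(e_J)$. Hence, after tensoring with $\Bbbk$, those terms vanish. Removing a factor of $e_J$ keeps both the lcm and the modified condition, since subsets of $J$ still satisfy it.

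The assignment $e_J\Theta_n\mapsto e_J$ then identifies $\mathcal{M}_{n,r}$, in positive filtration, with the chain complex $\widehat{\mathcal{J}}$, shifted by $2^{n-1}+1$. The element $\Theta_n$ itself, in filtration $2^{n-1}$, corresponds to the augmentation; this is why we restrict to $t>1$.

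\emph{Step two} compares $\mathcal{M}_{n,r}$ with the full-support Taylor complex. By Lemma~\ref{lem:decompostion}, the admissible full-support elements of $\mathcal{L}_{n,r}$ all have the form $e_J\Theta_n$ with $e_J$ Lyubeznik admissible. Lyubeznik admissibility implies modified admissibility: if $x_ax_{\overline a}$ divided $\lcm(e_J)$, the larger generator $e_{(a,\overline a)}$ would violate admissibility. So we get inclusions of chain complexes
\[
\mathcal{L}_{n,r}\big|_{\mathrm{full}}\subset \mathcal{M}_{n,r}\subset \mathcal{T}_{n,r}\big|_{\mathrm{full}}.
\]
The composite of these inclusions is a quasi-isomorphism, because $\mathcal{L}$ is a resolution and the weak equivalence $\iota$ respects the multidegree (support) grading. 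Consequently the map $H(\mathcal{L}|_{\mathrm{full}})\to H(\mathcal{M})$ is split injective, with retraction given by the map $H(\mathcal{M})\to H(\mathcal{T}|_{\mathrm{full}})$ followed by the inverse of the isomorphism. Since $H(\mathcal{L}|_{\mathrm{full}})$ computes $\widetilde H^{*}(\Sigma\VR)$, the degree bookkeeping of Corollary~\ref{cor:dual} gives the summand $H^{2^{n-1}-t-1}(\VR)\hookrightarrow H_{t-1}(\widehat{\mathcal{J}})$ for $t>1$.

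The main obstacle is step one: verifying that $\mathcal{M}_{n,r}$ really is closed under the Taylor differential with the correct signs, and that the sign conventions of $\widehat d$ on $\widehat{\mathcal{J}}$ agree with those coming from Lemma~\ref{lem:diff}. A second point needs care for integral coefficients: the splitting must hold over $\nz$. It does, because the retraction above is a genuine chain-level factorization through a quasi-isomorphism, so no field is needed. I would also double-check that the quasi-isomorphism of Lyubeznik and Taylor complexes restricts to each multidegree; this is standard, since both differentials preserve the lcm grading.
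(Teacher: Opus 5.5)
Your proposal is correct and follows the paper's argument: the paper also embeds $\widehat{\mathcal{J}}$ into the Taylor complex via $e_J\mapsto e_J\Theta_n$, so that it sits between $\widehat{\mathcal{C}}$ (the full-support Lyubeznik part) and the full-support Taylor complex. It then invokes Lyubeznik's theorem that the composite is a quasi-isomorphism, which splits off $H_{t-1}(\widehat{\mathcal{C}})\cong H^{2^{n-1}-t-1}(VR(Q_n;r))$ as a direct summand of $H_{t-1}(\widehat{\mathcal{J}})$. Your explicit checks supply details the paper leaves implicit: that $\mathcal{M}_{n,r}$ is closed under the differential, that Lyubeznik admissibility implies modified admissibility, and that the augmentation term $\Theta_n$ is harmless for $t>1$.
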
   
	Thus, the rank of $H_{t-1}(\mathcal{J}_{n,r})$ provides an upper bound on the rank of $H^{2^{n-1}-t-1}(\VR)$.
	
	\begin{proof}
		For $t>0$, there is the embedding of 
		\[
		\iota\colon\widehat{\mathcal{J}}\lhook\joinrel\longrightarrow\mathcal{T}\qquad
		e_J \longmapsto e_J \Theta_n.
		\]
		There is also an inclusion of chain complexes
		\[
		\lambda\colon \widehat{\mathcal{C}} \lhook\joinrel\longrightarrow \widehat{\mathcal{J}}.
		\]
		By Lyubeznik's theorem, the composition $\iota \circ \lambda$ induces the identity in homology.
	\end{proof}

\begin{lem}
	\label{lem:Jflag}
	The simplicial complex $\mathcal{J}$ is the clique complex of a graph $\mathcal{G}$ whose vertex set is the vertex set of $\mathcal{J}$, with an edge between two vertices if and only if they satisfy the modified admissibility condition.
\end{lem}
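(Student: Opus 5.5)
To prove Lemma~\ref{lem:Jflag}, I will show that a finite set of vertices $e_J=\{e_{(a_1,b_1)},\ldots,e_{(a_t,b_t)}\}$ of $\mathcal{J}$ satisfies the modified admissibility condition if and only if every pair $\{e_{(a_i,b_i)},e_{(a_j,b_j)}\}$ with $i\neq j$ satisfies it. The plan also needs each single vertex to be a face. Once both facts are in place, the faces of $\mathcal{J}$ are exactly the cliques of the graph $\mathcal{G}$, so $\mathcal{J}$ is the clique complex of $\mathcal{G}$, i.e.\ $\mathcal{J}$ is flag. Note that the ordering plays no role here, unlike in $\mathcal{C}$, since the modified condition is symmetric in the factors.

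I first check the vertices. For a vertex $e_{(a,b)}$ with $r<d_H(a,b)<n$, the pair $\{a,b\}$ is not antipodal, so $b\neq \overline a$. Hence $x_c x_{\overline c}\notdivides x_ax_b$ for every $c$, and each vertex is a $0$-simplex. The implication from a face to its pairs is immediate, because divisibility is inherited: if $x_cx_{\overline c}$ divides the product for a pair, it divides the product for the whole set. Thus modified admissibility passes to subsets, and $\mathcal{J}$ is a simplicial complex.

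The converse is the main step. Suppose $x_cx_{\overline c}$ divides $x_{a_1}x_{b_1}\cdots x_{a_t}x_{b_t}$ for some $0\le c<2^{n-1}$, where the product is read as a least common multiple, i.e.\ on supports. Then there are indices $\alpha,\beta$ with $c\in\{a_\alpha,b_\alpha\}$ and $\overline c\in\{a_\beta,b_\beta\}$. Since no single generator contains an antipodal pair, $\alpha\neq\beta$. It follows that $x_cx_{\overline c}$ divides $x_{a_\alpha}x_{b_\alpha}x_{a_\beta}x_{b_\beta}$, so the pair $\{e_{(a_\alpha,b_\alpha)},e_{(a_\beta,b_\beta)}\}$ fails the condition. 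This is the same reduction as in Lemma~\ref{lem:inadmissible}, simplified because only the factors of $\Theta_n$ are tested.

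The only real subtlety is reading the product in the definition as a monomial support (lcm) rather than allowing repeated variables. If a variable appeared twice, a repeated $x_c$ could not create a divisibility by $x_cx_{\overline c}$ anyway, since $c\neq\overline c$. So the argument is unaffected either way, and I expect no genuine obstacle.
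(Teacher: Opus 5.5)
Your proposal is correct and follows essentially the same route as the paper: a failure of modified admissibility is witnessed by two generators containing $c$ and $\overline{c}$ respectively, so all minimal non-faces of $\mathcal{J}$ are edges and $\mathcal{J}$ is the clique complex of its $1$-skeleton $\mathcal{G}$. Your extra checks are details the paper leaves implicit: every vertex is a face because $d_H(a,b)<n$ rules out antipodal pairs, and the two witnessing generators are therefore distinct.
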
	
\begin{proof}
	The modified admissibility condition is pairwise in
	nature. In particular, if the monomial $e_J$ is not modified admissible,
	then there exist indices $i\neq j$ such that
	$x_ax_{\overline a}\mid x_{a_i}x_{b_i}x_{a_j}x_{b_j}$, which implies that the pair
	$\{e_{(a_i,b_i)},e_{(a_j,b_j)}\}$ is not modified admissible. 
	Consequently, all minimal non-faces of $\mathcal{J}$ have cardinality two.
	Equivalently, $\mathcal{J}$ is the clique complex of its $1$-skeleton, which
	we denote by $\mathcal{G}$. 
\end{proof}

As a consequence, upper bounds on the coconnectivity of $\VR$ can be obtained by estimating combinatorial invariants of the graph $\mathcal{G}$ and its complement $\mathcal{G}^c$. We now proceed to complete the proof of Theorem~\ref{thm:coconnect} by bounding the total domination number of $\mathcal{G}^c$.

\begin{proof}[Proof of Theorem~\ref{thm:coconnect}]
	As in the proof of Theorem~\ref{thm:main}, we estimate the total domination number of $\mathcal{G}^c$. The vertex set of $\mathcal{G}^c$ consists of all generators
	$e_{(a,b)}$ with $0\le a,b<2^n$ and $r<d_H(a,b)<n$. For each vertex $a\in Q_n$, there are
	\[
	S=\sum_{i=r+1}^{n-1}\binom{n}{i}
	\]
	vertices $b\in Q_n$ satisfying $d_H(a,b)>r$ (this sum is defined since
	$n>r+1$). Consequently, $2^n \sum_{i=r+1}^{n-1}\binom{n}{i}$ pairs $(a,b)$ satisfy the above condition, and since each unordered pair is counted twice, the graph $\mathcal{G}^c$ has $2^{n-1}S$ vertices.
	
	Each vertex $e_{(a,b)}\in \mathcal{G}^c$ is adjacent to all vertices of the form $e_{(\overline a,c)}$ with
	$d_H(\overline a,c)>r$, and to all vertices of the form $e_{(c,\overline b)}$
	with $d_H(c,\overline b)>r$. Each of these sets has cardinality $S$, and since
	$e_{(\overline a,\overline b)}$ belongs to both, the degree of every vertex of
	$\mathcal{G}^c$ is
	\[
	\Delta(\mathcal{G}^c)=2S-1.
	\]
	
	By Theorem~\ref{thm:hy}, the total domination number $\gamma_t(\mathcal{G}^c) >\frac{2^{n-1}S}{2S-1}$, where the the inequality is strict since the right-hand side is not an integer. By Theorem~\ref{thm:ch}, the independence complex $I(\mathcal{G}^c) =\mathcal{J}$ is therefore $(\frac{2^{n-2}S}{2S-1}-1)$-connected. Finally, Proposition~\ref{prop:summand} implies that $H^{\ast}(\VR)=0$ for all $\ast\ge 2^{n-1}-\frac{2^{n-2}S}{2S-1}-1$, which proves the theorem.
\end{proof}

\section{Koszul complex descriptions of cohomology classes in $\VR$}\label{sec:koszul}

In the previous sections, we developed a toric topological framework for studying Vietoris--Rips complexes via Stanley--Reisner rings, moment-angle
complexes, and Tor algebras. This framework enabled us to extract global topological information, such as connectivity and coconnectivity, from combinatorial data.

In this section, we use the same techniques in a more constructive way to describe explicit cohomology classes in Vietoris--Rips complexes. Our approach is based
on the Koszul resolution computing the Tor algebra
\[
\Tor_{\Bbbk[x_0,\ldots,x_{m-1}]}(SR[\VR],\Bbbk)
\]
which allows us to represent cohomology classes by concrete monomials and to detect them using full subcomplexes of~$\VR$, answering the question of Adams and Virk~\cite{av} on combinatorial realisibility of (co)homological classes as the boundary of a cross polytope.

From this perspective, the cohomology classes constructed by Adams and Virk~\cite{av} appear naturally as $*$-decomposable classes in the cohomology of the associated moment-angle complexes, and their propagation
phenomena admit a conceptual explanation in terms of Tor algebra products and Koszul generators. We also introduce \emph{ghost vertices}, which provide a flexible tool for embedding full subcomplexes into~$\VR$ and for proving linear independence of families of cohomology classes.

We begin with a general observation that characterises cocycles in the Koszul complex in terms of the combinatorics of the underlying simplicial complex.

\begin{prop}\label{prop:gen} 
	Let $K$ be a simplicial complex with vertex set $[m]$. A nonzero  monomial
	\[
	\alpha=u_{j_1}\cdots u_{j_t} x_{i_1}\cdots x_{i_s}   
	\]
	in the Koszul complex of $\Tor_{\Bbbk[x_0,\ldots,x_{m-1}]}(SR[K],\Bbbk)$, whose support is $[m]$, is a cocycle if and only if $\{i_1, \ldots, i_s\}$ is a maximal simplex in $K$. 
\end{prop}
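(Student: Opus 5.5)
We argue directly from the definition of the Koszul differential, applied to the image of $\alpha$ in $\Lambda(u_0,\dots,u_{m-1})\otimes SR[K]$. First we use the nonvanishing hypothesis. Since $\alpha\neq 0$ in the Koszul complex, the monomial $x_{i_1}\cdots x_{i_s}$ is nonzero in $SR[K]$. This means it does not lie in $\mathcal I_K$.

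We also need the $x$-part to be square-free, with indices disjoint from the $u$-indices. The reason is that the support of $\alpha$ is $[m]$ while $\alpha$ has exactly $t+s$ factors. Full support therefore forces $t+s\ge m$. In the support-graded summand relevant to Hochster's decomposition (internal degree $2m$, as recalled before the statement), we have $t+s=m$ exactly. Hence $\{j_1,\dots,j_t\}$ and $\{i_1,\dots,i_s\}$ partition $[m]$, and the $i_k$ are distinct. With this, $\sigma=\{i_1,\dots,i_s\}$ is a face of $K$.

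Next we compute the differential:
\[
d\alpha=\sum_{\ell=1}^t (-1)^{\ell+1}u_{j_1}\cdots\widehat{u}_{j_\ell}\cdots u_{j_t}\, x_{j_\ell}x_{i_1}\cdots x_{i_s}.
\]
The exterior monomials $u_{j_1}\cdots\widehat{u}_{j_\ell}\cdots u_{j_t}$ are distinct for distinct $\ell$. Moreover, $\Lambda\otimes SR[K]$ is free over $SR[K]$ with the exterior monomials as basis. So $d\alpha=0$ if and only if, for every $\ell$, the monomial $x_{j_\ell}x_{i_1}\cdots x_{i_s}$ vanishes in $SR[K]$. Because $j_\ell\notin\sigma$, this monomial is square-free, so it vanishes exactly when $\sigma\cup\{j_\ell\}\notin K$.

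Finally we translate this into maximality. The set $\{j_1,\dots,j_t\}$ is the complement $[m]\setminus\sigma$. Thus $d\alpha=0$ if and only if $\sigma\cup\{v\}\notin K$ for every vertex $v\notin\sigma$. Since $K$ is closed under taking subsets, this is equivalent to $\sigma$ being a maximal simplex of $K$. Both directions follow at once from this chain of equivalences.

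The only delicate point is the first step, justifying that the $x$-indices are distinct and complementary to the $u$-indices. This relies on reading ``support $[m]$'' within the internal-degree-$2m$ summand. If non-square-free monomials were allowed, we would handle them separately: they are nonzero only when their square-free part is a face, and the same computation then applies verbatim with $\sigma$ the underlying set of the $x$-indices. The remaining steps are routine bookkeeping with the Stanley--Reisner relations.
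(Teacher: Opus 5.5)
Your proposal is correct and follows the paper's argument: you expand $d\alpha$, observe that it vanishes exactly when each $x_{j_\ell}x_{i_1}\cdots x_{i_s}$ lies in $\mathcal I_K$, and use full support to identify $\{j_1,\ldots,j_t\}$ with the complement of $\{i_1,\ldots,i_s\}$ (a step the paper leaves implicit and which you correctly justify via the internal-degree-$2m$ summand). One caution: your closing aside about relaxing that restriction is wrong once a $u$-index coincides with an $x$-index. For example, if $K$ is the $1$-simplex on $\{0,1\}$, then $u_0x_0x_1$ is nonzero with support $[2]$ and $\{0,1\}$ is maximal, yet $d(u_0x_0x_1)=x_0^2x_1\neq 0$, so the degree restriction you impose is genuinely needed for the statement to hold.
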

\begin{proof}
	The differential of $\alpha$ is given by
	\[
	d(\alpha) = \sum_{k=1}^{t} \pm \frac{u_{j_1} \cdots u_{j_t}}{u_{j_k}} x_{j_k} x_{i_1} \cdots x_{i_s}. 
	\]
	Thus $d(\alpha)=0$ if and only if each monomial $\frac{ u_{j_1} \cdots u_{j_t}}{u_{j_k}} x_{j_k} x_{i_1} \cdots x_{i_s}$ vanishes, which is equivalent to $x_{j_k} x_{i_1} \cdots x_{i_s}\in\mathcal{I}_{K}$ for all $k$.  
	Since the support of $\alpha$ is $[m]$, this condition holds when  no additional vertex can be added to the simplex $\{i_1,\ldots, i_s\}$, that is, when $\{i_1,\ldots, i_s\}$ is a maximal simplex in $K$.
\end{proof}
We now specialise this description to the Vietoris--Rips complex $\VR$. Recall
that the Stanley--Reisner ideal of $\VR$ is minimally generated by
\[
\{x_a x_b \mid d_H(a,b)>r,\ a,b\in[2^n]\}.
\]

When $n=r+1$, the combinatorics of $VR(Q_{r+1};r)$ becomes particularly simple, and the structure of its full subcomplexes can be described explicitly.

\begin{prop} \label{prop:Qr} 
	The full subcomplexes of $VR(Q_{r+1};r)$ satisfy
	\[	 
	\Sigma VR(Q_{r+1};r)_J\simeq
	\begin{cases} 
		S^{j} &\text{if } J=\{n_1,\overline{n}_1,\ldots, n_j,\overline{n}_j\} \\
		& \hspace{1.2em}\text{with } n_i,\overline{n}_i \text{ distinct integers in }
		[2^{r+1}],\\[0.3em]
		\ast
		& \text{otherwise}
	\end{cases} 
	\]
	where $\overline{n}$ denotes the integer obtained by switching the entries in the binary expansion of $n$.
	
	Moreover, if $J=\{ n_1,\overline{n}_1,\ldots, n_j,\overline{n}_j\} , L=\{ m_1,\overline{m}_1,\ldots, m_s,\overline{m}_s\}$ with all $n_i,m_i,\overline{n}_i, \overline{m}_i$ distinct, then the $*$-product~\eqref{starp} 
	\[
	\widetilde{H}^j(\Sigma VR(Q_{r+1};r)_J) \otimes \widetilde{H}^s(\Sigma VR(Q_{r+1};r)_L) \to\widetilde{H}^{j+s}(\Sigma VR(Q_{r+1};r)_{J \cup L};\Bbbk)
	\]
	is the isomorphism $\Bbbk\otimes_{\Bbbk}\Bbbk\to\Bbbk$.
\end{prop}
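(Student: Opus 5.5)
The plan is to exploit the fact that for $n=r+1$ the only missing edges of $VR(Q_{r+1};r)$ are the antipodal pairs, since $d_H(a,b)>r$ forces $d_H(a,b)=r+1=n$, that is $b=\overline a$. Hence the Stanley--Reisner ideal is generated by the $2^r$ monomials $x_a x_{\overline a}$, which pairwise share no variables. Consequently $VR(Q_{r+1};r)$ is the join $\ast_{a<2^{r}}\{a,\overline a\}$ of $2^r$ copies of $S^0$, namely the boundary of the $2^r$-dimensional cross polytope. Every full subcomplex on $J$ is then the join of the restrictions of each factor $\{a,\overline a\}$ to $J$.

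The first step is to classify these restrictions. A factor meets $J$ in the empty set, a single point, or both points. If some factor meets $J$ in exactly one point, the full subcomplex is a cone on that point, hence contractible, and so is its suspension. Otherwise $J=\{n_1,\overline n_1,\ldots,n_j,\overline n_j\}$, and $K_J=(S^0)^{\ast j}\cong S^{j-1}$, so $\Sigma K_J\simeq S^j$. The case $J=\emptyset$ is handled by the convention $\Sigma K_\emptyset=S^0$.

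For the product, the plan is to use the $*$-pairing~\eqref{starp}. When $J\cap L=\emptyset$ with both of antipodal type, $J\cup L$ is again of antipodal type. The inclusion $K_{J\cup L}\to K_J\ast K_L$ is then an equality, since $K_{J\cup L}=(S^0)^{\ast(j+s)}=K_J\ast K_L$. The induced map on cohomology is therefore the join (suspension) isomorphism $\widetilde H^{j-1}(S^{j-1})\otimes\widetilde H^{s-1}(S^{s-1})\to\widetilde H^{j+s-1}(S^{j+s-1})$, which after the degree shift is $\Bbbk\otimes\Bbbk\to\Bbbk$.

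As an algebraic cross-check, which may be the cleaner route, I would use the Koszul complex. By Proposition~\ref{prop:gen}, $\prod_i u_{n_i}x_{\overline n_i}$ is a cocycle on support $J$. The product of two such cocycles on disjoint supports is again such a monomial on $J\cup L$, and it generates the rank-one cohomology with $j+s$ variables $x$, which matches the stated degree. The main obstacle is verifying that the product is a generator rather than a multiple: one must check that this monomial is not a coboundary and spans $H$ on support $J\cup L$. I would settle this with the Künneth decomposition $SR=\bigotimes_a \Bbbk[x_a,x_{\overline a}]/(x_ax_{\overline a})$, which splits the Koszul complex as a tensor product of the $2^r$ factor complexes. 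This makes the product map the tensor of the generators up to sign.
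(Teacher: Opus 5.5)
Your proof is correct. The classification step is the paper's own argument. Only antipodal pairs are missing edges, so $VR(Q_{r+1};r)=\ast_a\{a,\overline a\}$. Full subcomplexes are therefore joins of the restricted factors, and a factor met in a single point makes $K_J$ a cone.

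For the product, the paper stays in the Koszul complex. It takes the representatives $\prod_i u_{\overline n_i}x_{n_i}$, multiplies them, and asserts without further justification that the resulting monomial generates the cohomology on support $J\cup L$.

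Your main argument is instead geometric and works directly from the definition of the $*$-pairing. Every antipodal pair in $J\cup L$ lies entirely in $J$ or entirely in $L$, so the inclusion $K_{J\cup L}\to K_J\ast K_L$ is an equality. The pairing is then just the join/Künneth isomorphism
\[
\widetilde H^{j-1}(S^{j-1})\otimes\widetilde H^{s-1}(S^{s-1})\to\widetilde H^{j+s-1}(S^{j+s-1}).
\]
This gives the isomorphism without choosing representatives or ruling out coboundaries.

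Your Koszul cross-check is the paper's route. The splitting $SR=\bigotimes_a\Bbbk[x_a,x_{\overline a}]/(x_ax_{\overline a})$ supplies exactly the justification the paper omits. In multidegree $\{a,\overline a\}$, each factor complex is $\Bbbk\to\Bbbk^2$, and its cohomology is spanned by the class of $u_ax_{\overline a}$. So the product of generators on disjoint factors is again a generator.

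The geometric route is the cleaner proof of the statement as posed. The Koszul route is worth keeping because the paper reuses the explicit monomial representatives later: for ghost vertices, in the proof of Theorem~\ref{thm:4.7}, and in the $\ast$-decomposability theorem. Your choice $u_{n_i}x_{\overline n_i}$ instead of the paper's $u_{\overline n_i}x_{n_i}$ does not matter. Since $d(u_au_{\overline a})=u_{\overline a}x_a-u_ax_{\overline a}$, the two choices are cohomologous factor by factor.
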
 

\begin{proof} 
	Since antipodal vertices in $Q_{r+1}$ are at Hamming distance $r+1$, the complex $VR(Q_{r+1};r)$ is combinatorially the boundary of a cross polytope and can be expressed as the join of $2^r$ 0-spheres,
	\[
	VR(Q_{r+1};r) = \overbrace{ \{0,\overline{0}\}  
		\ast \cdots\ast \{ 2^r-1, \overline{2^r-1} \}}^{2^r}.
	\]
	A full subcomplex corresponding to a union of antipodal pairs is therefore the
	join of 0‑spheres, hence a sphere, whereas any subcomplex
	containing a single non‑antipodal vertex is a join with at least one factor a point and therefore it is contractible.
	
	In the Koszul complex for $VR(Q_{r+1};r)$, a generator of $\widetilde{H}^j(\Sigma(VR(Q_{r+1};r);\Bbbk) =\widetilde{H}^j(S^j;\Bbbk)$ is represented by the monomial
	\[
	\prod_{i=1}^j u_{\overline{n}_i}x_{n_i}.
	\]
	Applying the $*$-product in the Koszul complex~\eqref{starp},
	\[
	\prod_{i=1}^j u_{\overline{n}_i}x_{n_i} \otimes   \prod_{i=1}^s u_{\overline{m}_i}x_{m_i} \longmapsto\prod_{i=1}^j u_{\overline{n}_i}x_{n_i} \prod_{i=1}^s u_{\overline{m}_i}x_{m_i} 
	\]
	produces the generator of $\widetilde{H}^{j+s}(\Sigma VR(Q_{r+1};r)_{J\cup L};\Bbbk)$, proving the second
	claim.
\end{proof}

The constructions above allow us to identify explicit cohomology classes arising
from full subcomplexes of $\VR$. To complete the picture, we now address the problem of proving linear independence of certain families of classes in $H^*(\VR)$. Our strategy is to detect these classes by restricting them to
carefully chosen full subcomplexes $\VR_I$, where their behaviour becomes transparent.

To consider a full subcomplex $\VR_I$ as a complex on $2^n$ vertices, we add \emph{ghost vertices}, that is, vertices which are present algebraically but do not appear as $0$-simplices. This produces a simplicial complex
$\overline{VR}(Q_n;r)_I$ which admits a canonical inclusion into $\VR$ while retaining
precise control over its Koszul and Tor algebra structure.

For our purposes, ghost vertices are most conveniently described using the
Stanley--Reisner ring of $\overline{VR}(Q_n;r)_I$. This ring is defined by
\[
SR[\overline{VR}(Q_n;r)_I] = \Bbbk[x_0,\cdots,x_{2^n-1}]/ \mathcal{I}
\]
where the ideal $\mathcal{I}$ is generated by 
\[		  
\begin{array}{llll}
	x_{a}x_{b} &\text{if}  & a,b \in I & d_H(a,b)>r \\
	x_{a} & \text{if}  &  a\in [2^{n}]\setminus I.
\end{array}
\]	
There is a natural simplicial map
\[
\overline{VR}(Q_n;r)_I  \longrightarrow \VR
\]
of complexes on the same vertex set, inducing a morphism of corresponding Stanley--Reisner rings.

From the description of the Koszul complex, it follows that the reduced cohomology of $\overline{VR}(Q_n;r)_I$ is given by	
\begin{equation}\label{coghost}  
	\widetilde{H}^{\ast}(\overline{VR}(Q_n;r)_I)\cong\widetilde{H}^{\ast}(\VR_I)\otimes \Lambda( u_a\mid a \notin I).
\end{equation}

We conclude this section by illustrating the effectiveness of the Koszul complex approach through a reproof of two propagation type results of Adams and Virk~\cite{av}. From the toric topological viewpoint developed above, both results arise naturally from explicit Koszul representatives and their behaviour under inclusions of full subcomplexes. In particular, the ghost vertex construction provides a conceptual explanation of the resulting propagation phenomena.

We begin by recalling first a lower bound on the rank of the homology of $\VR$ in
degree $2^r-1$, and then state a general propagation result describing how
non‑trivial homology classes arising on smaller hypercubes extend to larger
ones.

\begin{thm}[Adams-Virk~\cite{av}, Theorem~4.1]
	\label{thm:4.7}
	For $n\ge r+1$,		
	\[
	\rank H_{2^r-1}(\VR)\ge 2^{n-(r+1)} \binom{n}{r+1}.
	\]
\end{thm}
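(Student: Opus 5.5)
The plan is to exhibit $2^{n-(r+1)}\binom{n}{r+1}$ explicit cohomology classes in $H^{2^r-1}(\VR)$, one for each $(r+1)$-dimensional face of the cube $\mathbb I^n$, and then prove they are linearly independent using ghost vertices. By the universal coefficient theorem this gives the stated lower bound on the rank of homology, at least over a field $\Bbbk$. Over $\nz$ we first work with $\Bbbk=\mathbb Q$ and then pass back to integral rank.

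\emph{Construction of the classes.} A face $F$ of dimension $r+1$ is given by choosing a set $A$ of $r+1$ free coordinates, which gives $\binom{n}{r+1}$ choices, together with fixed values on the remaining $n-r-1$ coordinates, which gives $2^{n-r-1}$ choices. Its vertex set $V_F$ is isometric to $Q_{r+1}$. Since $\VR$ is flag and distances are inherited, the full subcomplex $\VR_{V_F}$ is exactly $VR(Q_{r+1};r)$. By Proposition~\ref{prop:Qr}, this is the boundary of a cross polytope on $2^{r+1}$ vertices, namely a $(2^r-1)$-sphere. It is represented in the Koszul complex with support $V_F$ by
\[
\omega_F=\prod_{i=1}^{2^r} u_{\overline{n}_i}x_{n_i},
\]
where $\{n_i,\overline n_i\}$ runs over the antipodal pairs of $F$. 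Via Hochster's decomposition, the summand $\widetilde H^{2^r-1}(\VR_{V_F})$ is a direct summand of $\Tor$. To obtain a class in $\widetilde H^{2^r-1}(\VR)$ itself, I will use the inclusion of the full subcomplex $\VR_{V_F}\hookrightarrow \VR$ in homology. The fundamental cycle $\alpha_F$ of the cross-polytope boundary is a $(2^r-1)$-cycle in $\VR$, and this is the candidate class.

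\emph{Independence.} Suppose $\sum_F c_F\alpha_F=0$ in $H_{2^r-1}(\VR)$. To detect a single coefficient $c_{F_0}$, I want a map out of $\VR$ that kills every $\alpha_F$ with $F\neq F_0$ while sending $\alpha_{F_0}$ to a generator. Using ghost vertices, the natural candidate is a simplicial retraction-like map $\VR\to\overline{VR}(Q_n;r)_{I}$ with $I=V_{F_0}$, collapsing all vertices outside $I$. In the Koszul picture, \eqref{coghost} shows that $\omega_{F_0}$ survives, while any $\omega_F$ whose support is not contained in $I$ involves some $x_a$ with $a\notin I$, and $x_a$ is zero in $SR[\overline{VR}(Q_n;r)_I]$. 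Dually, in cohomology, the class $[\omega_{F_0}]$ pulls back along $\VR\to\overline{VR}$ and pairs to $\pm1$ with $\alpha_{F_0}$. Its pairing with $\alpha_F$ for $F\ne F_0$ vanishes because $\alpha_F$ uses a vertex outside $I$. Since two distinct $(r+1)$-faces share fewer than $2^{r+1}$ vertices, each $\alpha_F$ with $F\neq F_0$ must indeed contain a vertex outside $V_{F_0}$.

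\emph{Main obstacle.} The hard part is making this detecting cocycle rigorous, because a restriction-to-full-subcomplex map $\VR\to\VR_I$ does not exist simplicially. The plan is therefore to build the cocycle directly rather than through a map. On $\VR$, define $\phi_{F_0}$ to be the simplicial cochain taking the value $1$ on one chosen maximal simplex $\sigma$ of the cross polytope of $F_0$, namely the simplex $\{n_1,\dots,n_{2^r}\}$ read off from $\omega_{F_0}$, and $0$ on all other simplices.

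First check that $\sigma$ is maximal in all of $\VR$: any $v\notin V_{F_0}$ added to $\sigma$ must lie at distance greater than $r$ from some $n_i$. Granting this, $\phi_{F_0}$ is a cocycle, which is exactly the content of Proposition~\ref{prop:gen} applied to full support. Then $\langle\phi_{F_0},\alpha_F\rangle=\delta_{F,F_0}$ up to sign, since $\sigma$ is a face of the cycle $\alpha_F$ only when $F=F_0$.

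The maximality of $\sigma$ is the step I expect to require care. The natural choice is to take $\{n_i\}$ to be the vertices of $F_0$ with an even number of ones in the free coordinates. An outside vertex $v$ differs from $F_0$ in the fixed coordinates, and its free part agrees with some point of $F_0$; that point or its antipode in the chosen half yields distance at least $r+1$ from $v$. If this parity choice fails, I will instead choose $\sigma$ adapted to $v$'s projection.
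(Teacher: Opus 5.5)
Your overall strategy is the paper's, which follows Adams--Virk: cross-polytope cycles $\alpha_F$ on the $(r+1)$-faces, each detected by the cochain dual to a maximal simplex $\sigma\subset V_{F_0}$, i.e.\ the Koszul cocycle $u_\ast x_{n_1}\cdots x_{n_{2^r}}$. The gap is exactly the step the paper takes from \cite{av}, namely the choice of $\sigma$.

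\textbf{Maximality of $\sigma$.} Your argument fails when the projection $p$ of an outside vertex $v$ onto $F_0$ lies in $\sigma$. Then $d_H(v,p)$ is only the number $d\ge 1$ of fixed coordinates in which $v$ differs, and this can be $1$. What is needed is the local diameter condition of Definition~\ref{defi:localdim}: every vertex of $\sigma$ has another vertex of $\sigma$ at distance exactly $r$. This gives $d_H(v,n_i)=d+r>r$, which is Proposition~\ref{prop:lift}. Your even-weight set does satisfy this when $r$ is even. When $r$ is odd, however, the antipodal map of $Q_{r+1}$ preserves parity, so that set contains antipodal pairs at distance $r+1$ and is not a simplex at all. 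The fallback of choosing $\sigma$ ``adapted to $v$'s projection'' is not available, since the single cochain $\phi_{F_0}$ must block every outside $v$ at once.

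\textbf{The pairing $\langle\phi_{F_0},\alpha_F\rangle=\pm\delta_{F,F_0}$.} Your remark about shared vertices is not the relevant point. We have $\langle\phi_{F_0},\alpha_F\rangle\neq 0$ iff $\sigma\subseteq V_F\cap V_{F_0}$, and for $F\ne F_0$ this is a face of dimension $\le r$. Since $|\sigma|=2^r$, this happens only if $\sigma$ is the vertex set of a facet of $F_0$. In that case $\sigma$ also lies in every other $(r+1)$-face through that facet. Facets are maximal simplices of the cross polytope and satisfy the local diameter condition, so they must be excluded by hand. This is the cubical hull condition the paper invokes.

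\textbf{The hypothesis $r\ge 2$.} This is also where $r\ge 2$ enters, a hypothesis your argument never uses (and the statement as quoted omits) but which is necessary. For $r=1$ every maximal simplex of the $4$-cycle is an edge of the square. Indeed $VR(Q_3;1)$ is the cube graph, with $\rank H_1=5<6$.

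\textbf{A repair.} A uniform choice for all $r\ge 2$ is, in the free coordinates of $F_0$ with addition mod $2$,
$\sigma=\bigl(\{x: x_1=0\}\setminus\{\mathbf 0,\mathbf 1+e_1\}\bigr)\cup\{\mathbf 1,e_1\}$.
\begin{itemize}
\item It contains exactly one vertex of each antipodal pair.
\item Each $x\in\sigma$ has a neighbour $y\notin\sigma$, so $\overline{y}\in\sigma$ lies at distance $r$ from $x$. This is the local diameter condition, hence $\sigma$ is maximal.
\item For $r\ge 2$ every coordinate takes both values on $\sigma$, so $\sigma$ is not a facet.
\end{itemize}
With this $\sigma$ your pairing matrix is $\pm$ the identity, and the bound follows directly over $\mathbb{Z}$.
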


\begin{thm}[Adams--Virk~\cite{av}, Theorem~6.4]
	\label{thm:4.8}
	Let $q\ge 1$ and let $p$ be the smallest integer such that $H_q(VR(Q_p;r))\neq 0$. Then, for $n\geq p$,
	\[
	\rank H_q(\VR)\ge\sum_{i=p}^n 2^{i-p} \binom{i-1}{p-1}  \rank H_q(VR(Q_p;r)).
	\]
\end{thm}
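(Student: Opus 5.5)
Our plan is to reprove Theorem~\ref{thm:4.8} inside the Koszul framework, using ghost vertices both to transport classes and to show the transported classes are linearly independent. We work dually in cohomology over a field $\Bbbk$; the integral rank statement then follows from the universal coefficient theorem. We first fix a basis of $\widetilde H^{q}(VR(Q_p;r))$ and, via Hochster's decomposition with $J=[2^p]$, represent each basis element by a Koszul cocycle $\omega$ with full support $[2^p]$.

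\textbf{Step 1: indexing the embedded copies of $Q_p$.} For each $i$ with $p\le i\le n$ we choose the copies of $Q_p$ that are counted by $2^{i-p}\binom{i-1}{p-1}$. These are the $p$-dimensional subcubes of $Q_i$ that use coordinate $i$ as one of their free coordinates, with the other $p-1$ free coordinates chosen among the first $i-1$, and with arbitrary values in the remaining $i-p$ fixed coordinates. We embed $Q_i$ into $Q_n$ by padding with zeros. Since Hamming distance restricted to a subcube is the Hamming distance of $Q_p$, each such subcube $F$ gives a full subcomplex $\VR_F\cong VR(Q_p;r)$. Each $F$ therefore carries a copy $\omega_F$ of every basis cocycle.

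\textbf{Step 2: extending to classes on $\VR$.} We extend $\omega_F$ to the whole vertex set using ghost vertices. By \eqref{coghost}, the class $\omega_F\cdot\prod_{a\notin F}u_a$ is a cocycle of full support in the Koszul complex of $\overline{VR}(Q_n;r)_F$. We push it forward along the natural map $\overline{VR}(Q_n;r)_F\to\VR$. Equivalently, this is the image of $[\omega_F]$ under the map $H^*(\VR_F)\to H^*(\VR)$ that is dual to the inclusion in homology.

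The real content is that these images are nonzero and independent. By the minimality of $p$, for every proper subcube $F'$ of smaller dimension the restriction $H^q(\VR_{F'})$ vanishes. We use this through a restriction argument: we restrict a putative linear relation to a full subcomplex $\VR_F$. Any other subcube $G$ meets $F$ in a face of dimension less than $p$, so the restriction of its class to $\VR_F$ factors through $\widetilde H^q(\VR_{F\cap G})$ or through a cone, and hence it restricts to zero. The class for $F$ itself restricts to the original basis element.

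\textbf{Main obstacle.} The key step is showing that restriction to $\VR_F$ kills the classes of every other subcube. The difficulty is that classes transported from a subcube do not simply restrict to subcomplexes. Our first attempt is to order the subcubes by the index $i$ and by the lexicographic position of their fixed coordinates. We then restrict to $\VR_{F\cup\text{(vertices below)}}$ rather than to $\VR_F$, and argue triangularity with ghost vertices. Vertices outside the chosen set become ghosts, so their $u_a$ factors split off as in \eqref{coghost}. This makes the restriction map of Koszul complexes an explicit projection, under which the leading term is exactly $\omega_F$ and the lower terms lie in other subcubes.

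The counting in Step 1, which requires coordinate $i$ to be free, is what should make the resulting matrix unitriangular. If that fails, we would fall back to the Mayer--Vietoris-type argument of Adams and Virk along the coordinate $i$. In that argument we split $Q_i=Q_{i-1}\times\{0,1\}$ and verify injectivity of the connecting contributions through the Koszul representatives.
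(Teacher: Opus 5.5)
There is a genuine gap, and it sits exactly at the step you call the real content. Your vanishing claim is that for $G\neq F$ the class of $G$ restricts to zero on $\VR_F$ because $F\cap G$ has dimension $<p$. That claim uses nothing about the indexing of Step~1. If it were valid, it would make the classes of all $2^{n-p}\binom{n}{p}$ subcubes independent. This already fails for $q=r=1$, $p=2$, $n=3$. Here $VR(Q_3;1)$ is the cube graph, whose $H_1$ has rank $5$, while the six square faces give six classes satisfying one relation. So the restriction $H^1(VR(Q_3;1);\mathbb{Q})\to\bigoplus_{\text{faces}}H^1(\text{face};\mathbb{Q})\cong\mathbb{Q}^6$ has rank $5$, and no extensions with $c_G|_F=\delta_{FG}$ can exist. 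The failure is concrete for parallel subcubes, and these do occur in your family: the faces $x_1=0$ and $x_1=1$ both appear for $i=3$. For such a pair $F\cap G=\emptyset$. Yet the natural extension $\pi_G^*\omega_G$, with $\pi_G\colon Q_n\to G$ the $1$-Lipschitz coordinate projection, restricts to a nonzero class on $\VR_F$, because $\pi_G|_F$ is an isometry onto $G$. The restriction of a global class to $\VR_F$ is simply not controlled by $F\cap G$.

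Step~2 also runs in the wrong direction. The ghost-vertex map $\overline{VR}(Q_n;r)_F\to\VR$ induces the quotient $SR[\VR]\to SR[\overline{VR}(Q_n;r)_F]$. That gives a Koszul map \emph{from} $\VR$ to the ghost complex, i.e.\ restriction. Likewise, the dual of $H_*(\VR_F)\to H_*(\VR)$ is $H^*(\VR)\to H^*(\VR_F)$, not a map the other way.

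Nor is $\omega_F\prod_{a\notin F}u_a$ a cocycle for $\VR$ in general. Its differential contains terms $x_a\cdot(\cdots)$ with $a\notin F$, and these survive whenever $a$ extends a simplex occurring in $\omega_F$. This is why Theorem~\ref{thm:4.7} needs the local diameter condition of Proposition~\ref{prop:lift}.

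You can repair the existence of some extension with a retraction such as $\pi_F$. What then remains is to choose splitting maps so that the $\sum_i 2^{i-p}\binom{i-1}{p-1}$ copies, parallel ones included, stay independent. That is exactly the ingredient the paper imports: it inducts on $n$ and takes injectivity from the Adams--Virk concentration maps, which it identifies with the ghost-vertex Koszul maps. Your unitriangular ordering is only a hope, and your fallback is that same Adams--Virk input.

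A minor point: work over $\Bbbk=\mathbb{Q}$. Over $\mathbb{F}_\ell$, the integral minimality of $p$ need not kill $H^q$ of smaller subcubes.
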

From the Koszul complex viewpoint, the propagation phenomenon described in Theorems~\ref{thm:4.7} and~\ref{thm:4.8} reflect the fact that a nonzero Tor algebra class detected on $VR(Q_p;r)$ can be extended to
larger vertex sets by adding ghost vertices. Each such extension produces a new $*$-decomposable Koszul cocycle with full support. The combinatorial
choices involved account for the multiplicative factor
$2^{\,i-p}\binom{i-1}{p-1}$  appearing in the Adams–Virk bounds, while the ghost vertex construction ensures that the resulting classes remain linearly independent in cohomology, thereby establishing the
claimed lower bound.

Adams and Virk obtain their results by constructing isometric copies of $Q_p$ in $Q_n$ and then showing that the certain embeddings induce injections on homology by  constructing splitting maps from $Q_n$ onto $Q_p$. In~\cite{av}, these splittings are constructed geometrically: for Theorem~\ref{thm:4.7} they are called contractions, while for Theorem~\ref{thm:4.8} they are called concentrations. In contrast, our approach replaces these splitting maps by algebraic detection of full subcomplexes of $\VR$ realising the copies of $Q_p$. The Koszul complex then provides a natural
setting in which restriction maps and linear independence can be analyzed uniformly.

We now recall the embeddings of $Q_p$ into $Q_n$ described in
\cite[Section~2.4]{av}, and reinterpret them in terms of full subcomplexes of
$\VR$.  Let $S = \{s_1,\cdots,s_p\} \subseteq [n]$ be a set of cardinality $p$, referred to as the set of variable coordinates. Fix $\mathbf{b}=\{b_i\}_{i \in [n]\setminus S}$ of values to the remaining coordinates; following~\cite{av}, this choice is called an \emph{offset}. With this data we associate a full subcomplex of $\VR$. Let $J_{\mathbf{b}}\subseteq Q_n$ consist of all vertices
with binary expansions
\[
(j_{n-1},\ldots,j_0),\qquad j_i\in\{0,1\}
\]
where
\[
j_i=
\begin{cases}
	0 \text{ or } 1, & i\in S,\\
	b_i, & i\notin S.
\end{cases}
\]  

The full subcomplex $\VR_{J_{\mathbf{b}}}$ is isomorphic to $Q_p$ via the map $Q_p\longrightarrow \VR_{J_{\mathbf{b}}}$ sending a vertex $\{a_1, \ldots,a_p\} \in Q_p$ to the vertex of $Q_n$ with coordinates
\[	  
j_i= 
\begin{cases} 
	a_i, &  i\in S \\ 
	b_i,  & i\notin S.
\end{cases} 
\]
We now sketch the algebraic proof of Theorem~\ref{thm:4.7}; the argument for
Theorem~\ref{thm:4.8} follows the same pattern, with additional bookkeeping.	
\begin{proof}[Proof of Theorem~\ref{thm:4.7}] Let $p=r+1$. By~\eqref{coghost},  the degree $2^r$ cohomology of $\Sigma\overline{VR}(Q_n;r)_{J_{\mathbf{b}}}$ is isomorphic to $\mathbb Z$ with generator represented by 
	\[
	\Bigl[\prod_{i\in [2^n]\setminus J_{\mathbf{b}} }u_{i}  \Bigr]\cdot\Bigl[\prod u_{\overline{n}_i} x_{n_i} \Bigr]  
	\]
	where $\{ n_1 ,\ldots, n_{2^r} , \overline{n}_1, \ldots, \overline{n}_{2^r}\} =J_{\mathbf{b}}$ and $\{ n_1 ,\ldots, n_{2^r}\}$ is a maximal simplex in $\Sigma\VR_{J_{\mathbf{b 
	}}}$.  
	Adams and Virk give a condition on such maximal simplices that guarantees that the corresponding class lift to $H^{2^r}(\Sigma\VR)$. This condition can be expressed purely in terms of the Stanley–Reisner ideal.	
	
	\begin{defin}\label{defi:localdim}   
		A maximal simplex $\{n_1,\ldots, n_{2^r}\}$ satisfies the
		\emph{local diameter} condition if, for every $t$ with $1 \le t\le 2^r$, there is an index $i$ such that 
		$x_{n_t}x_{n_i}\in\mathcal I_{VR(Q_{r+1};r-1)}$.
	\end{defin}

	\begin{prop} \cite[Proposition 4.4]{av} \label{prop:lift} 
		If $\{n_1, \ldots, n_{2^r}\}$  satisfies the local diameter condition, then  
		\[
		\prod_{i \in [2^r] \setminus \{n_1, \ldots, n_{2^r}\}} u_i \cdot \prod x_{n_1}\cdots x_{n_{2^r}}
		\]
		is a cocycle in the Koszul complex for $\VR$.
	\end{prop}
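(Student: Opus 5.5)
We will deduce this from Proposition~\ref{prop:gen}. Read the statement inside the full Koszul complex of $\VR$, with $p=r+1$ and the $x_{n_i}$ indexing the chosen maximal simplex inside $J_{\mathbf b}$. The $u_i$ then run over all remaining vertices of $Q_n$, namely the antipodes $\overline{n}_t$ inside $J_{\mathbf b}$ (antipodal within the copy of $Q_{r+1}$) together with all vertices outside $J_{\mathbf b}$. So the monomial is a single term whose support is all of $[2^n]$. By Proposition~\ref{prop:gen}, it is a cocycle if and only if $\{n_1,\ldots,n_{2^r}\}$ is a \emph{maximal} simplex of the whole complex $\VR$, not merely of the full subcomplex $\VR_{J_{\mathbf b}}$. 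The plan is therefore to show that every vertex $v\in Q_n\setminus\{n_1,\ldots,n_{2^r}\}$ lies at Hamming distance $>r$ from some $n_t$.

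First I will unpack the local diameter condition. Via the isometry $Q_{r+1}\cong J_{\mathbf b}$, the condition $x_{n_t}x_{n_i}\in\mathcal I_{VR(Q_{r+1};r-1)}$ means $d_H(n_t,n_i)\ge r$. Both vertices lie in a simplex of $VR(Q_{r+1};r)$, so $d_H(n_t,n_i)\le r$, and hence $d_H(n_t,n_i)=r$ exactly. Also, by Proposition~\ref{prop:Qr}, $VR(Q_{r+1};r)$ is the join of the $2^r$ antipodal $0$-spheres. Hence a maximal simplex consists of exactly one vertex from each antipodal pair, and its complement in $J_{\mathbf b}$ is $\{\overline{n}_1,\ldots,\overline{n}_{2^r}\}$.

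Next comes a case analysis on $v$. If $v\in J_{\mathbf b}$, then $v=\overline{n}_t$ for some $t$, and $d_H(v,n_t)=r+1>r$. If $v\notin J_{\mathbf b}$, write $v$ as its restriction $w$ to the variable coordinates $S$ together with its offset part, which differs from $\mathbf b$ in $k\ge1$ coordinates; distances then split additively as $d_H(v,y)=d_S(w,y)+k$ for $y\in J_{\mathbf b}$. If $w$ corresponds to some $\overline{n}_t$, then $d_H(v,n_t)=r+1+k>r$. Otherwise $w$ corresponds to some $n_t$, and the local diameter condition supplies $n_i$ with $d_S(n_t,n_i)=r$, giving $d_H(v,n_i)=r+k>r$.

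Hence no vertex can be added, the simplex is maximal in $\VR$, and Proposition~\ref{prop:gen} yields the cocycle property. The only delicate step is the last subcase, where $v$ projects onto a vertex of the simplex itself. There the offset contributes only $k\ge1$, which is enough only because the local diameter condition guarantees a partner at distance exactly $r$. This is precisely where the hypothesis is used, and I expect it to be the main point to check carefully, together with confirming that the intended index set for the $u_i$ is the complement in $[2^n]$, so that the support really is full.
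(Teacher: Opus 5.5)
Your argument is correct. The paper gives no proof of its own and simply imports the result from Adams--Virk, but the route it relies on implicitly is exactly yours: reduce via Proposition~\ref{prop:gen} to maximality of $\{n_1,\ldots,n_{2^r}\}$ in all of $\VR$, a fact the paper later asserts outright. Your case analysis via $d_H(v,y)=d_S(w,y)+k$, invoking the local diameter condition only when $w$ projects onto a vertex of the simplex, establishes that maximality cleanly. You are also right that the index set $[2^r]$ in the displayed product must be read as $[2^n]$ for the monomial to have full support.
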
 
	Adams and Virk show that, for each choice of $J_{\mathbf{b}}$, there exists a maximal simplex satisfying the local diameter condition. Varying the choice of $S$ and the offset $\mathbf{b}$ produces $2^{n-(r+1)}\binom{n}{r+1}$ such cocycle representatives of classes in $H^{2^r}(\Sigma\VR)$.
	
	To prove linear independence, one further chooses the classes so that each cocycle fails to exist on
	any smaller cube (known as the cubic hull condition in~\cite{av}). Denote these cocycles by $c_1, \dots, c_k$ where $k=2^{n-(r+1) }\binom{n}{r+1}$.  Suppose that
	$\lambda= \sum_{i=1}^k \lambda_i c_i=0$. Mapping the
	Koszul complex of $\Sigma\VR$ to the Koszul complexes of
	$\Sigma\overline{VR}(Q_n;r)_{J_{\mathbf{b}}}$ for various $\mathbf{b}$, then detects individual generators, forcing all coefficients $\lambda_i$ to vanish. This completes the proof of Theorem~\ref{thm:4.7}.	
\end{proof}

\begin{proof}[Proof of Theorem~\ref{thm:4.8}]
	The proof follows the same strategy as that of Theorem~\ref{thm:4.7} and proceeds by induction on $n$. The base case $n=p$ is immediate from the assumption that
	$H_q(VR(Q_p;r))\neq 0$.
	
	For the inductive step, the sets $J_{\mathbf{b}}$ are defined as in~\cite[Section~5]{av} and determine the corresponding full subcomplexes $\VR_{J_{\mathbf{b}}}$. Introducing ghost vertices yields complexes
	$\overline{VR}(Q_n;r)_{J_{\mathbf{b}}}$ admitting canonical maps into $\VR$. The
	induced maps between Koszul complexes are precisely the concentration maps of~\cite[page~72]{av}. These extend non‑trivial Koszul cocycles from $VR(Q_p;r)$ to $\VR$ while preserving linear independence. Counting the admissible choices of $J_{\mathbf{b}}$ gives the stated lower bound.
\end{proof}

\begin{exam}
	We illustrate the effectiveness of the Koszul complex by computing
	the rank of $H^3(VR(Q_4;2))$. Adams and Virk~\cite{av} show in
	Theorem~\ref{thm:4.7} that $VR(Q_4;2)$ has at least eight linearly independent cohomology classes in degree 3. An additional ninth generator is constructed geometrically in~\cite[Section~7]{av}, although there is no explicit discussion that it is linearly independent from the eight other classes. Using the Koszul complex, we recover all nine classes and verify their linear independence in a uniform algebraic way.
	
	More precisely, we exhibit nine cocycles in the Koszul complex of $VR(Q_4;2)$ representing linearly independent classes in $H^4(\Sigma VR(Q_4;2))$. These cocycles are
	\begin{equation}  \label{cocycles} \end{equation}
	
	\[
	\begin{aligned}
		a_1 &= u_\ast x_0x_1x_2x_3, &
		a_2 &= u_\ast x_0x_1x_4x_5, &
		a_3 &= u_\ast x_0x_2x_4x_6, &
		a_4 &= u_\ast x_0x_3x_5x_6,\\
		a_5 &= u_\ast x_2x_3x_6x_7, &
		a_6 &= u_\ast x_1x_3x_5x_7, &
		a_7 &= u_\ast x_1x_2x_4x_7, &
		a_8 &= u_\ast x_4x_5x_6x_7,\\
		a_9 &= u_\ast x_8x_9x_{10}x_{11}.
	\end{aligned}
	\]
	Here $u_\ast$ denotes the product of the $u_i$ corresponding to the vertices complementary to those appearing in the $x$–monomial.
	
	Each class $a_1,\ldots, a_8$ is of the form $u_*x_{n_1}x_{n_2}x_{n_3}x_{n_4}$, where $u_* x_{n_1}x_{n_2}x_{n_3}x_{n_4}$ lies in the image of $VR(Q_3;2)\longrightarrow VR(Q_4;2)$ that inserts a $0$ into the leftmost coordinate of each vertex $i$. In the terminology of~\cite{av}, this corresponds to an offset of $0$ in the left position. Among the sixteen  $3$-dimensional simplices of $VR(Q_3;2)$, there are the eight  simplices $\{n_1,n_2,n_3,n_4\}$ which satisfy  Definition~\ref{defi:localdim}.    Therefore, by Proposition~\ref{prop:lift}, each class $a_i$ for $i=1,\dots, a_8$ represents a cocycle. The class $a_9$ arises as the image of the simplex $\{0,1,2,3\}$ with offset $1$ in the leftmost coordinate.
	
	Adams and Virk impose an additional \emph{cubical hull} condition on the maximal simplices in $VR(Q_3;2)$ in order to prove that the corresponding classes in $VR(Q_4;2)$ are lineraly independent, see~\cite[Lemma 4.5]{av}. This cubical hull condition is not satisfied by the classes in~\eqref{cocycles}. Instead we establish their independence by introducing ghost vertices.
	Suppose that
	\[
	R=  \sum_{i=1}^9 \lambda_i a_i = 0
	\quad\text{in } H^4(\Sigma VR(Q_4;2)).
	\]
	We show that all coefficients $\lambda_i$ must vanish. We do this by finding  full subcomplexes $VR(Q_4;2)_{L_i}$ $ i= 1, \dots, 9$ of $VR(Q_4;2)$ which are of the form $S^0 \ast S^0 \ast S^0 \ast S^0$. Furthermore, for each $i$, the map $\overline{VR}(Q_4;2)_{L_i}\longrightarrow VR(Q_4;2)$ induces a map in the Koszul complex which sends $a_i$ non-zero and $a_j, j \neq i $ to zero which implies $\lambda_i=0$.
	
	First, consider the set
	\[
	L_2=\{0,1,4,5,8,9,12,13\}.
	\]
	In the Koszul complex for the ghost complex $\overline{VR}(Q_4;2)_{L_2}$, the variables $x_a$ with $a\in[16]\setminus L_2$ map to zero. Consequently,
	\[
	a_i \longmapsto 0 \quad \text{for all } i \neq 2.
	\]

	The full subcomplex $VR(Q_4;2)_{L_2}$  is combinatorially of the form
	\[ 
	\{0,13\} \ast \{1,12\} \ast \{4,9\} \ast \{5,8\}
	\]
	which is the boundary of a cross polytope. In particular $a_2$ maps to the generator of $H^4(\Sigma \overline{VR}(Q_4;2)_{L_2})$. It follows that $\lambda_2=0$. Repeating this argument with appropriate choices of $L_i$ shows that
	\[
	\lambda_i=0 \qquad \text{for } i=2,\ldots,9.	
	\]
	(For example, letting $L_9=\{8,9,10,11,15,14,13,12\}$,  $a_9$ maps to the generator of $H^4(VR(Q_4;2)_{L_9})$ and $\lambda_9=0$.)  So $R=\lambda_1 a_1=0$ implying that $\lambda_1=0$.
	
	This proves that the nine classes $\{a_1,\ldots,a_9\}$ are linearly independent,
	and hence that $\rank H^3(VR(Q_4;2))\ge 9$. It is shown in~\cite{aa} that $\rank H^3(VR(Q_4;2))= 9$.
\end{exam}





	
		
		


Finally, we show that the classes constructed by Adams and Virk arise naturally as $*$-decomposable elements in the cohomology of the associated moment-angle complex, providing a conceptual explanation of their structure and propagation behaviour.		
\begin{thm} 
	The duals of the classes in~\cite[Theorem 4.1]{av} are the $\ast$--product of one dimensional classes. 
\end{thm}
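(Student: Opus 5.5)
We will show that each Adams--Virk class in $H^{2^r}(\Sigma\VR)$ has a Koszul representative that factors as a $*$-product of $2^r$ degree-one classes, each supported on a single antipodal pair. We use the explicit representatives from the proof of Theorem~\ref{thm:4.7}. Fix a choice of variable coordinates $S$ with $|S|=r+1$ and an offset $\mathbf b$. Let $\{n_1,\ldots,n_{2^r}\}$ be a maximal simplex of $\VR_{J_{\mathbf b}}$ satisfying the local diameter condition of Definition~\ref{defi:localdim}. Then $J_{\mathbf b}=\{n_1,\overline n_1,\ldots,n_{2^r},\overline n_{2^r}\}$, where $\overline{n}_i$ is the antipode of $n_i$ inside the copy of $Q_{r+1}$. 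By Proposition~\ref{prop:lift} the Koszul cocycle with full support is
\[
c=\Bigl[\prod_{i\in[2^n]\setminus J_{\mathbf b}}u_i\Bigr]\cdot \prod_{k=1}^{2^r} u_{\overline n_k}x_{n_k}.
\]
Dually, this is the class in $\widetilde H^{2^r-1}(\VR)$ evaluating to $\pm1$ on the cross-polytope cycle of Adams--Virk.

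First, we identify the one-dimensional factors. For each $k$, the pair $J_k=\{n_k,\overline n_k\}$ has $d_H(n_k,\overline n_k)=r+1>r$. So $\VR_{J_k}=S^0$, and $\widetilde H^0(\VR_{J_k})=\widetilde H^1(\Sigma\VR_{J_k})\cong\Bbbk$. By Proposition~\ref{prop:gen} this group is represented by the Koszul cocycle $u_{\overline n_k}x_{n_k}$, whose support is $J_k$ and in which $\{n_k\}$ is a maximal simplex of $\VR_{J_k}$. These are the \emph{one-dimensional classes}. The remaining factor $\prod_{i\notin J_{\mathbf b}}u_i$ is the unit class of $\widetilde H^{-1}(\VR_{\{i\}})$, i.e.\ $H^0(\Sigma\emptyset)$, for each singleton $\{i\}$ with $i\notin J_{\mathbf b}$. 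This is exactly the ghost-vertex factor $\Lambda(u_a\mid a\notin I)$ of \eqref{coghost}.

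Second, we form the product. The Koszul complex is a differential algebra, and its product agrees up to sign with the $*$-pairing \eqref{starp}. The supports $J_1,\ldots,J_{2^r}$ and the singletons $\{i\}$ are pairwise disjoint. Hence the iterated product of the cocycles above is, up to sign, the monomial $c$, so $[c]=\pm\,[u_{\overline n_1}x_{n_1}]*\cdots*[u_{\overline n_{2^r}}x_{n_{2^r}}]*\prod_i[u_i]$. This shows that $[c]$ is $*$-decomposable.

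The main obstacle is that the product is taken in $H^*(\mathcal Z_{\VR})=\bigoplus_J\widetilde H^*(\VR_J)$. On the full subcomplex $\VR_{J_{\mathbf b}}$ the product is nonzero by Proposition~\ref{prop:Qr}, since there the complex is the join of the $S^0$'s, and hence so is the corresponding product for $\VR$. However, the factors must be cocycles of the \emph{full} Koszul complex of $\VR$ restricted to their supports. This holds because the Koszul complex with support $J$ depends only on $\VR_J$. The remaining check is that the product, which lies a priori in the summand indexed by $[2^n]$, is the class $[c]$ rather than a class that becomes a coboundary there. This follows from Proposition~\ref{prop:lift}, which asserts that $c$ itself is a cocycle, together with Theorem~\ref{thm:4.7}, which shows these classes are nonzero and independent. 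Reading the factorization through the Hochster isomorphism, and dualizing via the universal coefficient theorem, gives the statement.
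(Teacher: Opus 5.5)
There is a genuine gap, and it sits in the factor $\prod_{i\notin J_{\mathbf b}}u_i$. For $n>r+1$ each such $i$ is a genuine vertex of $\VR$, so $\VR_{\{i\}}$ is a point rather than the empty complex, and $\widetilde H^{*}(\VR_{\{i\}})=0$. Correspondingly, in the Koszul complex $d(u_i)=x_i\neq 0$ in $SR[\VR]$. So neither $u_i$ nor $\prod_{i\notin J_{\mathbf b}}u_i$ is a cocycle, and there is no class in $H^*(\mathcal Z_{\VR})$ supported on $\{i\}$.

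Reading $u_i$ as a class of $\widetilde H^{-1}(\emptyset)$ is valid only in the ghost-vertex situation of \eqref{coghost}, where $x_i=0$. What your argument actually proves is that the image of $[c]$ in the Koszul complex of $\overline{VR}(Q_n;r)_{J_{\mathbf b}}$ is decomposable, not that $[c]$ itself is. In $H^*(\mathcal Z_{\VR})$, the product $[u_{\overline n_1}x_{n_1}]*\cdots*[u_{\overline n_{2^r}}x_{n_{2^r}}]$ lies in the Hochster summand indexed by $J_{\mathbf b}$. It cannot reach the full-support summand that contains $[c]$. In particular, for $n>r+1$ no decomposition of $[c]$ into one-dimensional factors can have all its supports equal to antipodal pairs. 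Your last paragraph's appeal to Proposition~\ref{prop:lift} and Theorem~\ref{thm:4.7} does not repair this. Nonvanishing and independence are beside the point; what must be checked is that every factor is a cocycle of the Koszul complex of $\VR$.

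The missing idea, which is the paper's argument, is to absorb the vertices outside $\sigma=\{n_1,\ldots,n_{2^r}\}$ into the supports of the one-dimensional factors.
\begin{itemize}
\item Since $c$ is a cocycle with full support, Proposition~\ref{prop:gen} says $\sigma$ is maximal in all of $\VR$. This, rather than maximality in $\VR_{J_{\mathbf b}}$, is what the local diameter condition buys.
\item By maximality, every $i\notin\sigma$ has some $k$ with $d_H(n_k,i)>r$. Fix such an assignment $i\mapsto k(i)$, taking $k(\overline n_k)=k$.
\item Put $J_k=\{n_k\}\cup\{i : k(i)=k\}$ and $\alpha_k=\bigl(\prod_{k(i)=k}u_i\bigr)x_{n_k}$.
\end{itemize}
Each $\alpha_k$ is a cocycle, because $x_{n_k}x_i\in\mathcal I_{\VR}$ for every $i\in J_k\setminus\{n_k\}$. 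The $J_k$ are pairwise disjoint with union $[2^n]$, and $\alpha_1\cdots\alpha_{2^r}=\pm c$. Keeping $\overline n_k$ in $J_k$ also guarantees that no factor degenerates to the exact element $x_{n_k}=d(u_{n_k})$ supported on a singleton.
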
 
\begin{proof}
	In terms of the Koszul complex the duals of the cycles representing the classes in~\cite[Theorem 4.1]{av} are of the form  $u_* x_{n_1} x_{n_2} \cdots x_{n_{2^r}}$. Specifically, the simplex  $\{n_1, n_2, \dots, n_{2^r}\}$ is maximal in $VR(Q_n;r)$. Hence, for each $x_{n_i}, 1 \leq i \leq 2^r$, there are classes $u_{m_1}\cdots u_{m_s}$ with $d_H(n_i,m_j)>r$.
	
	Consider the one dimensional cocycles 
	\[
	u_{m_1} \cdots u_{m_s} x_{n_i},\quad   i=1, \dots, 2^r
	\]  
	in the Koszul complex. For different $i$, a class $u_{m_j}$ may appear more than once. By removing redundancies, we are left with a collection of one dimensional cocycles with disjoint support
	\[   
	u_{m_{i1}} \cdots u_{m_{i s_i}}x_{n_i}  \in VR(Q_n;r)_{J_i}, \quad i=1,\dots, 2^r   
	\]
	where $VR(Q_n;r)_{J_i}$ is the full subcomplex with $J_i=\{m_{i1}, \dots, m_{i s_i}, \  n_i \}$.  Note that $\{m_{i1}, \dots, m_{i s_i}\}$ may be empty.
	The class $ u_* x_{n_1} x_{n_2} \cdots x_{n_{2^r}}$ is the product in the Koszul complex of these one dimensional classes.
	
	Equivalently, we can say that the duals of the cycles described in \cite{av} are of the form
	$\alpha_{J_1} \ast \cdots \ast \alpha_{J_w}$ where the $J_i$ are mutually disjoint and $\bigcup J_i=[2^n]$. 
\end{proof}

\section{Combinatorial representative of a class in $H_4(VR(Q_5;3))$}
In this section we give an explicit combinatorial representative of the unique
non‑trivial cohomology class in $H^{4}(VR(Q_{5};3);\mathbb{Z})$. Although it is
known that
\[
H^{4}(VR(Q_{5};3);\mathbb{Z}) \cong \mathbb{Z}
\]
no explicit combinatorial representative of this class appears to have been
written down in the literature.

While it is often relatively straightforward to write down explicit cycles in the homology of a Vietoris--Rips complex, proving that such cycles represent non-trivial homology classes is considerably more subtle. At present, there is no systematic method for doing so when the cycle does not contain a maximal simplex. This is precisely the situation for $VR(Q_5;3)$, which has no maximal simplices of dimension four. We therefore begin by listing several families of explicit $4$-dimensional cycles and then prove that they represent a non-trivial homology class by constructing a cocycle that detects them.

To construct $4$-dimensional cycles in $VR(Q_{5};3)$, we observe that if missing
edges correspond to pairs of vertices at Hamming distance 5, such
pairs generate $S^{0}$-factors, then joins of five disjoint such $S^{0}$’s form
boundaries of $5$-dimensional cross polytopes, hence embedded $4$-spheres.

In $VR(Q_{5};3)$, disjoint pairs of vertices at distance five have the forms $\{v,v^{12345}\},
\{v^{i},v^{jklm}\}$, $\{v^{ij},v^{klm}\}$,
where $v^{i_1\ldots i_k}$ denotes the vertex whose binary expansion has $1$’s
precisely in the positions $\{i_1,\ldots,i_k\}$.

This leads to the following three families of embedded $4$-spheres:
\[
\begin{aligned}
	(A)\;&
	\{v^i,v^{jklm}\}*\{v^j,v^{iklm}\}*\{v^k,v^{ijlm}\}*
	\{v^l,v^{ijkm}\}*\{v^m,v^{ijkl}\},\\
	(B)\;&
	\{v,v^{12345}\}*\{v^{ij},v^{klm}\}*\{v^{ik},v^{jlm}\}*
	\{v^{il},v^{jkm}\}*\{v^{im},v^{jkl}\},\\
	(C)\;&
	\{v^i,v^{jklm}\}*\{v^j,v^{iklm}\}*
	\{v^{kl},v^{ijm}\}*\{v^{km},v^{ijl}\}*\{v^{lm},v^{ijk}\}.
\end{aligned}
\]
Each join is the boundary of a $5$-dimensional cross polytope and hence consists
of $32$ oriented $4$-simplices. Exactly one simplex in each boundary has all
vertices indexed by numbers less than $16$. We call this simplex the
\emph{small chain} of the corresponding cycle.

For example,
\[
\{1,30\}*\{2,29\}*\{4,27\}*\{8,23\}*\{15,16\}
\]
has small chain $[1,2,4,8,15]$.

A $4$-dimensional cocycle  
{\footnotesize
	\[
	\begin{aligned}
		\alpha = &[3,4,8,13,15],\ \textcolor{red}{[1,6,10,12,15]},\ \textcolor{red}{[2,5,9,12,15]},\ [1,2,7,12,15],\ [1,2,6,12,15],\\[0.4em]
		&[1,2,5,12,15],\ [1,2,4,12,15][1,6,8,11,15],\ [2,5,8,11,15],\ \textcolor{red}{[3,4,9,10,15]},\\[0.4em]
		&[1,6,8,10,15],\ [1,4,7,10,15],\ [1,4,6,10,15],\ [1,3,4,10,15],\ [1,2,4,10,15],\ [2,5,8,9,15],\ [3,4,8,9,15],\\[0.4em]
		&[2,4,7,9,15],\ [2,4,5,9,15][2,3,4,9,15],\ [1,2,4,9,15],\ \textcolor{red}{[3,5,6,8,15]},\\[0.4em]
		&[1,3,6,8,15],\ [1,2,6,8,15],\ [3,4,5,8,15][2,3,5,8,15],\ [1,2,5,8,15],\ [1,3,4,8,15],\\[0.6em]
		&\textcolor{red}{[1,2,4,8,15]},\ \textcolor{red}{[0,7,11,13,14]},\ \textcolor{red}{[3,4,8,13,14]},\ [0,3,7,13,14],\ [0,3,6,13,14],\ [0,3,5,13,14],\ [0,3,4,13,14],\\[0.4em]
		&[2,5,9,12,14],\ [0,7,9,11,14],\ \textcolor{red}{[2,5,8,11,14]},\ [0,5,7,11,14],\ [0,5,6,11,14],\ [0,3,5,11,14],\ [0,2,5,11,14],\\[0.4em]
		&[0,7,9,10,14],\ [3,4,9,10,14],\ [2,5,8,9,14],\ [3,4,8,9,14],\ \textcolor{red}{[2,4,7,9,14]},\ [0,3,7,9,14],\ [0,2,7,9,14],\ [2,4,5,9,14],\\[0.4em]
		&\textcolor{red}{[0,3,5,9,14]},\ [0,2,5,9,14],\ [2,3,4,9,14],\ [0,3,4,9,14],\ [3,5,6,8,14],\ [3,4,5,8,14],\ [2,3,5,8,14],\\[0.6em]
		&[0,3,5,8,14],\ [0,7,11,12,13],\ [1,6,10,12,13],\ [0,7,10,11,13],\ \textcolor{red}{[1,6,8,11,13]},\\[0.4em]
		&[0,6,7,11,13],\ [0,5,6,11,13],\ [0,3,6,11,13],\ [0,1,6,11,13],\ [0,7,9,10,13],\ [3,4,9,10,13],\ [1,6,8,10,13],\\[0.4em]
		&[3,4,8,10,13],\ \textcolor{red}{[1,4,7,10,13]},\ [0,3,7,10,13],\ [0,1,7,10,13],\ [1,4,6,10,13],\ \textcolor{red}{[0,3,6,10,13]},\ [0,1,6,10,13],\\[0.4em]
		&[1,3,4,10,13],\ [0,3,4,10,13],\ [3,5,6,8,13],\ [3,4,6,8,13],\ [1,3,6,8,13],\ [0,3,6,8,13],\ [1,6,10,11,12],\\[0.4em]
		&[0,7,9,11,12],\ [2,5,9,11,12],\ [1,6,8,11,12],\ [2,5,8,11,12],\\[0.4em]
		&[0,5,7,11,12],\ \textcolor{red}{[1,2,7,11,12]},\ [0,1,7,11,12],\ \textcolor{red}{[0,5,6,11,12]},\ [1,2,6,11,12],\\[0.4em]
		&[0,1,6,11,12],\ [1,2,5,11,12],\ [0,2,5,11,12],\ \textcolor{red}{[0,7,9,10,12]},\ [1,6,7,10,12],\ [1,4,7,10,12],\ [1,2,7,10,12],\\[0.4em]
		&[0,1,7,10,12],\ [2,5,7,9,12],\ [2,4,7,9,12],\ [1,2,7,9,12],\ [0,2,7,9,12],\ [3,5,6,8,11],\ [2,5,6,8,11],\ [1,5,6,8,11],\\[0.4em]
		&[3,4,7,9,10],\ [2,4,7,9,10],\ [1,4,7,9,10],\ [0,4,7,9,10]
	\end{aligned}
	\]}
was constructed computationally in Ripser. The cocycle $\alpha$ is a sum of
cochains dual to simplices whose vertices are all less than $16$. In particular,
the duals of all small chains appearing in the cycles of types $(A)$, $(B)$, and
$(C)$ occur as summands of $\alpha$ (shown in red).

Consequently, for any such cycle $\beta$ we have
\[
\langle \beta,\alpha\rangle \neq 0,
\]
which shows that all cycles arising from $(A)$, $(B)$, and $(C)$ are non‑zero in
homology and, moreover, represent the same generator of
$H_{4}(VR(Q_{5};3))\cong\mathbb{Z}$.

This provides an explicit combinatorial realisation of the unique $4$–dimensional
homology class of $VR(Q_{5};3)$. 

\begin{cor}
	The 4-degree cohomological class of $VR(Q_5;3)$ can be represented by the boundary of a cross polytope.
\end{cor}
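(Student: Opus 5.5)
The corollary follows by assembling the facts established just above; the plan is as follows. First, I will take the explicit sphere
\[
\beta=\{1,30\}*\{2,29\}*\{4,27\}*\{8,23\}*\{15,16\}
\]
of type (A). I will check that it is a subcomplex of $VR(Q_5;3)$ which is combinatorially the boundary of a $5$-dimensional cross polytope. Each paired set is an antipodal pair at Hamming distance $5$, so it is a missing edge. Any two vertices taken from different pairs are at distance at most $3$: they are neither equal nor antipodal, and both the pair structure and the enumeration of the forms $\{v^i,v^{jklm}\}$ show the distance is at most $3$ in every case. Since $VR(Q_5;3)$ is a flag complex, all $32$ facets of the join are simplices. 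So $\beta$ is an embedded $4$-sphere, and its fundamental class, the alternating sum of its oriented $4$-simplices, is a $4$-cycle.

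Next, I will evaluate the cocycle $\alpha$ on $\beta$. By construction, $\alpha$ is supported only on simplices whose vertices are all less than $16$. Among the $32$ facets of $\beta$, exactly one has this property, the small chain $[1,2,4,8,15]$. Indeed, each antipodal pair $\{v,\overline v\}$ contains exactly one vertex below $16$, so exactly one choice of vertex from each pair lies entirely below $16$. Since $[1,2,4,8,15]$ appears in $\alpha$ with coefficient $\pm1$, we get $\langle\beta,\alpha\rangle=\pm1$. I will also need $\alpha$ to be a cocycle; this is the Ripser output, and if required it can be verified directly by checking that $\delta\alpha$ vanishes on every $5$-simplex. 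The pairing then shows $[\beta]\neq0$, and moreover that $[\beta]$ is not divisible.

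Finally, I will use $H_4(VR(Q_5;3);\mathbb Z)\cong\mathbb Z$, which is known from Shukla and Feng. Because $\langle\beta,\alpha\rangle=\pm1$, the class $[\beta]$ generates $H_4$, and $[\alpha]$ is the dual generator of $H^4\cong\operatorname{Hom}(H_4,\mathbb Z)$; the $\operatorname{Ext}$ term vanishes because $H_3$ is free, or alternatively we can work over a field. Hence the degree-$4$ class is represented by, and detected exactly by, the boundary of a cross polytope.

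The step I expect to be the main obstacle is certifying that $\alpha$ is genuinely a cocycle, since this rests on a computer calculation. I would address it by a finite check, or by noting that it reduces to verifying $\delta\alpha=0$ on the $5$-simplices of $VR(Q_5;3)$ contained in $[16]$ together with the vertex $15$ or $14$.
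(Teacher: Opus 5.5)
Your proposal is essentially the paper's argument. You take a type (A) cross-polytope sphere, note that its only facet with all vertices below $16$ is the small chain $[1,2,4,8,15]$, and pair it with the Ripser cocycle $\alpha$, which is supported on such simplices and contains that chain, so $\langle\beta,\alpha\rangle\neq 0$ and $\beta$ represents the generator of $H_4\cong\mathbb{Z}$. You treat one sphere instead of all three families (A)--(C), and you spell out the subcomplex check and the universal-coefficient step more carefully. Like the paper, though, you ultimately rely on the computer output for $\alpha$ being a cocycle, and it must be an integral one (not only mod $2$) to conclude that $[\beta]$ is a generator rather than merely nonzero.
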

\bibliographystyle{amsalpha}
\bibliography{references}

\end{document}